\newtheorem{theorem}{Theorem}[section]
\newtheorem*{theorem*}{Theorem}
\newtheorem{lemma}[theorem]{Lemma}
\newtheorem{corollary}[theorem]{Corollary}
\newtheorem{proposition}[theorem]{Proposition}
\newtheorem{definition}[theorem]{Definition}
\newtheorem{question}{Question}
\newtheorem{claim}{Claim}
\newtheorem{example}[theorem]{Example}
\newcommand{\R}{\mathbb{R}}
\newcommand{\C}{\mathbb{C}}
\begin{document}

\title[H\"older invariance of the Henry-Parusinski invariant]
{H\"older invariance of the Henry-Parusinski invariant}

\author[A. Fernandes]{Alexandre Fernandes}
\author[J. E. Sampaio]{Jos\'e Edson Sampaio}
\address{Jos\'e Edson Sampaio and Alexandre Fernandes: 
Departamento de Matem\'atica, Universidade Federal do Cear\'a, Av. Humberto Monte, s/n Campus do Pici - Bloco 914, 60455-760
Fortaleza-CE, Brazil} 

 \email{alex@mat.ufc.br}

 \email{edsonsampaio@mat.ufc.br}

\author[J. P. da Silva]{Joserlan Perote da Silva}
\address{Joserlan Perote da Silva: Departamento de Matem\'atica, Universidade de Integra\c{c}\~ao Internacional da Lusofonia Afro-Brasileira (UNILAB), Campus dos Palmares, Cep. 62785-000. Acarape-Ce, Brasil} 

\email{joserlanperote@unilab.edu.br}

\thanks{The first named author was partially supported by CNPq-Brazil grant 304700/2021-5. The second named author was partially supported by CNPq-Brazil grant 310438/2021-7 and by the Serrapilheira Institute (grant number Serra -- R-2110-39576).}

\keywords{Continuous moduli, Lipschitz equivalence, H\"older equivalence, Right equivalence of functions}
\subjclass[2010]{14B05; 32S50 }

\begin{abstract}
In this article, we show the
H\"older invariance of the Henry-Parusinski invariant. For a single germ $ f$, the Henry-Parusinski invariant of $ f $ is given in terms of the leading coefficients of the asymptotic expansion of $ f $ along the branches of the generic polar curve of $f$. As a consequence, we obtain that the classification problem of polynomial function-germs, with uniformly bounded degree, under H\"older equivalence, admits continuous moduli.
\end{abstract}

\maketitle

\section{Introduction}
 
In this paper, we study germs of analytic functions $f\colon(\mathbb{C}^2,0)\rightarrow(\mathbb{C},0)$ up to change of coordinates which are H\"older and their inverse are also H\"older. 

Let us recall the definition of an $\alpha$-H\"older mapping: for $\alpha\in (0,1]$, $X\subset\R^n$ and $Y\subset\R^m$, a mapping $f\colon X\rightarrow Y$ is called {\bf $\alpha$-H\"older} if there exists $\lambda >0$ such that
$$|f(x_1)-f(x_2)|\le \lambda |x_1-x_2|^{\alpha},\quad \mbox{for all } x_1,x_2\in X.$$ 
An $\alpha$-H\"older mapping $f\colon X\rightarrow Y$ is called
{\bf bi-$\alpha$-H\"older} if its inverse mapping exists and is $\alpha$-H\"older.

As it was already noted in \cite{FernandesFSS:2023}, there is nothing new in considering the possibility of classifying such function-germs up  to  change of coordinates that are less regular than analytic diffeomorphisms; 
Actually, for the family $f_r(x,y)=xy(x+y)(x-ry)$, $r\in \R$, Whitney proved that for any $t\not=s$, close enough to 0, there is no diffeomorphism of class $C^1$, $h\colon (\mathbb{C}^2,0)\to (\mathbb{C}^2,0)$, 
that maps the special fiber of $f_t $ onto the special fiber of $f_s$.
This shows that there is a family of polynomial function germs, with uniformly bounded degrees, that have uncountable different classes under changes of coordinates of class $C^1$. 
H. Whitney showed that the problem of classification of germs of analytic functions up to $C^1$ diffeomorphisms presents the so-called {\it continuous moduli}. Since then, some authors have investigated the possibility of classifying germs of functions under homeomorphisms. 
For example, Fukuda \cite{Fukuda:1976} showed that such a classification problem does not exhibit the continuous moduli phenomenon, as described above in Whitney's example. More precisely, Fukuda showed that for any family of polynomial function germs with uniformly bounded degrees, it has finitely many models under topological changes of coordinates. Many other authors analyzed singularities under changes of coordinates that were less regular than analytic and more regular than topological ones; for instance, Kuo \cite{Kuo:1985}. 
Recently, Henry and Parusinski \cite{HenryP:2003} considered germs of analytic functions in two complex variables under bi-Lipschitz changes of coordinates and, just as Whitney did for the case of analytic changes, showed the existence of continuous moduli for this problem. It is worthy to mention that, due to Mostowski's Finiteness Theorem \cite{Mostowski:1985}, unlike Whitney's example, Henry and Parusinski could not resorting only to the study of special fibers of such functions to show the existence of continuous moduli in this case and, therefore, the work of Henry and Parusinski impressed researchers in the area in a very positive way due to the novelty of the techniques that they developed.

Given this brief historical context, let us state exactly the classification problems that we are going to consider here; namely, the classification problem of analytic two complex variable function-germs up to some H\"older change of coordinates as described below: 

\begin{definition} Let $f,g\colon(\mathbb{R}^n,0)\rightarrow (\mathbb{R},0)$ be function-germs.
\begin{itemize}
 \item $f$ and $g$ are said {\bf bi-$\alpha$-H\"older conjugated} if there exists a bi-$\alpha$-H\"older homeomorphism $H\colon(\mathbb{R}^n,0)\rightarrow (\mathbb{R}^n,0)$ such that $f=g\circ H$;
 \item $f$ and $g$ are said {\bf H\"older equivalent} if $f$ and $g$ are bi-$\alpha$-H\"older conjugated for any $\alpha\in (0,1)$. In this case, we write $f\sim_{H} g$; 
 \item $f$ and $g$ are said {\bf Lipschitz equivalent} if  there exists a bi-1-H\"older homeomorphism $H\colon(\mathbb{R}^n,0)\rightarrow (\mathbb{R}^n,0)$ such that $f=g\circ H$. 
\end{itemize}
\end{definition}
It is worth saying that for a fixed $\alpha\in (0,1)$, bi-$\alpha$-H\"older conjugation does not define an equivalence relation since the composition of two bi-$\alpha$-H\"older homeomorphisms is not, in general, a bi-$\alpha$-H\"older homeomorphism. However, $\sim_{H}$ define an equivalence relation and we have the following sequence of implications:
$$
C^1 \, eq. \Rightarrow Lipschitz \, eq. \Rightarrow H\ddot{o}lder\, eq. \Rightarrow Top.\, eq.
$$

In the setting given by the above definition, we have the following question.

\begin{question}\label{main_question}
	Is there a family of polynomials $\{f_t\}_{t\in I}$ (where $I$ is an uncountable set) with uniformly bounded degree such that $f_t$ is not H\"older equivalent to $f_s$ for any $s\not=t$?
\end{question}

The main aim of this paper is to show in Theorem \ref{main theorem} that the Henry-Parunsinski invariant defined in \cite{HenryP:2003} is invariant under H\"older equivalence (see Subsection \ref{subsec:HP_inv} to see the definition of the Henry-Parunsinski invariant). As a consequence, we obtain that the classification problem of polynomial function-germs, with uniformly bounded degree, under H\"older equivalence, admits continuous moduli. In particular, we arrive at a positive answer to the Question \ref{main_question} and we get the H\"older equivalence of analytic function-germs $(\mathbb{C}^2,0)\rightarrow (\mathbb{C},0)$ admits continuous moduli. 
Actually, we obtain that: given an integer number $d>1$, the $1$-parameter family of weighted homogeneous $(\mathbb{C}^2,0)\rightarrow(\mathbb{C},0)$ given
\begin{equation*}
f_{t}\left( x,y\right) =x^{3}-3t^{2}xy^{2d}+y^{3d}
\end{equation*}
under the H\"older equivalence admits continuous moduli (see Subsection \ref{subsec:existence_moduli}). 

\bigskip


\section{Preliminaries}\label{sec:preliminaries}

All the subsets of $\R^n$ or $\mathbb{C}^n$ considered in the paper are supposed to be equipped with the Euclidean distance. When we consider other distance, it is clearly emphasized.

\subsection{Lipschitz and H\"older mappings.} In the following, we introduce the definitions of Lipschitz (bi-Lipschitz) and H\"older (bi-H\"older) mappings.

\begin{definition}
Let $X\subset \R^n$ and $Y\subset \R^m$ be two subsets and let $h\colon X\to Y$.
\begin{itemize}
 \item We say that $h$ is {\bf Lipschitz} (resp. {\bf $\alpha$-H\"older}) if there exists a positive constant $C$ such that $|h(x)-h(y)|\leq C|x-y|$ (resp. $|h(x)-h(y)|\leq C|x-y|^{\alpha}$) for all $ x, y\in X$.
 
\item We say that $h$ is {\bf bi-Lipschitz} (resp. {\bf bi-$\alpha$-H\"older}) if $h$ is a homeomorphism, Lipschitz (resp. $\alpha$-H\"older) and its inverse is also Lipschitz (resp. $\alpha$-H\"older). In this case, we also say that $X$ is bi-Lipschitz (resp. bi-$\alpha$-H\"older) homeomorphic to $Y$.
\end{itemize}
\end{definition}

\subsection{Other distances}
Given a path connected subset $X\subset\R^n$, the
{\bf inner distance}  on $X$  is defined as follows: given two points $x_1,x_2\in X$, $d_X(x_1,x_2)$  is the infimum of the lengths of paths on $X$ connecting $x_1$ to $x_2$.

Another important distance in this article is the {\bf diameter distance} on a subset $X\subset\R^N$, which is defined as follows: given two points $x_1,x_2\in X$, $d_{X,diam}(x_1,x_2)$  is the infimum of the diameters of the image of paths on $X$ connecting $x_1$ to $x_2$.

In general, one has that $|x_1-x_2|\leq d_{X,diam}(x_1,x_2)\leq d_{X}(x_1,x_2)$, for all pair of points $x_1,x_2\in X$.
The important class of sets where these distances are equivalent is defined in the following:

\begin{definition}\label{def:lne}
Let $X\subset\R^N$ be a subset. We say that $X$ is {\bf Lipschitz normally embedded (LNE)} if there exists a constant $C\geq 1$ such that $d_{X}(x_1,x_2)\leq C|x_1-x_2 |$, for all pair of points $x_1,x_2\in X$. In this case, we also say that $X$ is $C$-LNE. 
\end{definition} 

In general, these distances are not equivalent as we can see in the next examples that were already presented in \cite{Sampaio:2024}.

\begin{example}
Let $X=\{(x,y)\in \R^2; y^3=x^2\}$. Then, $d_{X}$ and $d_{X,diam}$ are equivalent, but it is well-known that $d_X$ and the distance induced by the Euclidean distance are not equivalent. 
\end{example}

\begin{example}
Let $G$ be the graph of the function $f\colon \R\to \R$ given by
$$
f(x)=\left\{\begin{array}{ll}
            x\sin (\frac{1}{x}),& \mbox{ if }x\not=0,\\
            0,& \mbox{ if }x=0.
            \end{array}\right.
$$
Let $X=\{(tx,ty,t)\in \R^3;t\geq 0$ and $(x,y)\in G\}$. For any point $(x,y)\in G\setminus\{(0,0)\}$, we have $d_X((x,y,1),(0,0,1))=1+|(x,y)|\geq 2$. However, $d_{X,diam}((x,y,1),(0,0,1))\to 0$ as $(x,y)\to (0,0)$. Thus, $d_{X,diam}$ is not equivalent to $d_X$.
\end{example}

However, we have the following result proved in \cite{Sampaio:2024}.
\begin{theorem}
 Let $X\subset\R^N$ be a path-connected subset. Assume that $X$ is a definable set in an o-minimal structure on $\R$. Then $d_{X, diam}$ is equivalent to $d_X$. 
\end{theorem}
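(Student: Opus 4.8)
The plan is to reduce the statement to a bound $d_X\le C\,d_{X,\mathrm{diam}}$ — the reverse inequality $d_{X,\mathrm{diam}}\le d_X$ being already recorded above — and to obtain this bound from the pancake decomposition of definable sets, applied uniformly to a suitable definable family of subsets of $X$. First recall that for $a,b\in X$ and any path $\gamma$ in $X$ from $a$ to $b$, if $D$ is the diameter of the image $K$ of $\gamma$ then $K\subseteq\overline{B}(a,D)$, hence $K\subseteq X\cap\overline{B}(a,D)$. This suggests working with the definable family $\mathcal F=\{\,X\cap\overline{B}(a,D)\,\}_{(a,D)\in X\times(0,\infty)}$.

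Next I would invoke the pancake decomposition theorem in its parametrized form: there exist an integer $k$ and a constant $C_0\ge 1$ such that, for every $(a,D)$, the set $X\cap\overline{B}(a,D)$ is a disjoint union of at most $k$ definable \emph{pancakes} $P_1(a,D),\dots,P_k(a,D)$, each Lipschitz normally embedded with constant $C_0$, and moreover such that the inner distance of each closure $\overline{P_i(a,D)}$ is bounded by $C_0$ times the Euclidean distance (uniformity over the family follows from the definability of the construction in the pancake theorem). Given $a,b$ and, for a fixed $\varepsilon>0$, a path $\gamma$ in $X$ from $a$ to $b$ whose image $K$ has diameter $D<d_{X,\mathrm{diam}}(a,b)+\varepsilon$, I would use that $K$ is connected and covered by the finitely many closed sets $\overline{P_i(a,D)}$ to produce a chain of at most $k$ pancakes $P_{i_0},\dots,P_{i_m}$ with $a\in\overline{P_{i_0}}$, $b\in\overline{P_{i_m}}$ and $\overline{P_{i_j}}\cap\overline{P_{i_{j+1}}}\ne\varnothing$; choosing connecting points $z_j$ in these intersections (and $z_{-1}=a$, $z_m=b$), each consecutive pair $z_{j-1},z_j$ lies in a common $\overline{P_{i_j}}$ and inside $\overline{B}(a,D)$, whence $|z_{j-1}-z_j|\le 2D$. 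Then
\[
d_X(a,b)\le\sum_{j=0}^{m} d_{\overline{P_{i_j}(a,D)}}(z_{j-1},z_j)\le\sum_{j=0}^{m}C_0|z_{j-1}-z_j|\le 2kC_0\,D<2kC_0\bigl(d_{X,\mathrm{diam}}(a,b)+\varepsilon\bigr),
\]
and letting $\varepsilon\to0$ gives $d_X\le 2kC_0\,d_{X,\mathrm{diam}}$, which together with $d_{X,\mathrm{diam}}\le d_X$ yields the claimed equivalence.

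The main obstacle I anticipate is the first half of the second step: making the pancake decomposition genuinely uniform over the family $\mathcal F$ while retaining control of the inner distance of the \emph{closures} of the pancakes — this is exactly where the o-minimality hypothesis is essential, and it requires care because restricting $X$ to a ball can create new, badly behaved boundary along the bounding sphere. A clean way around this is to decompose $X\cap B(a,2D)$ (an open ball) instead, leaving room away from the sphere at the cost of replacing $2kC_0$ by $4kC_0$. The remaining ingredients — the elementary fact that a connected set covered by finitely many closed sets has a connected nerve, and the bookkeeping that places each consecutive pair $z_{j-1},z_j$ in a common closed pancake — are routine but should be written out carefully.
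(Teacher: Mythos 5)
First, note that the paper itself does not prove this statement: it is quoted from \cite{Sampaio:2024}, so your proposal can only be measured against the strategy of that reference, which, as far as I can tell, is indeed the pancake--decomposition--plus--chaining mechanism you describe. Within that strategy, however, the step you yourself single out as the main obstacle --- a pancake decomposition of $X\cap\overline{B}(a,D)$ that is uniform in $(a,D)$, with a uniform number of pieces and a uniform LNE constant --- is both unjustified as written (``follows from the definability of the construction'' is not an argument: uniformity of the LNE constant over a definable family is a genuine theorem, not bookkeeping) and unnecessary. Apply the pancake theorem once to $X$, obtaining finitely many pieces $P_1,\dots,P_k$ with each $\overline{P_i}$ being $C_0$-LNE. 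If $K$ is the image of a path from $a$ to $b$ whose diameter is $D<d_{X,diam}(a,b)+\varepsilon$, the sets $K\cap\overline{P_i}$ are closed in $K$ and cover the connected set $K$, so your nerve argument already produces connecting points $z_j$ lying in $K$ itself; hence $|z_{j-1}-z_j|\le D$ for free, and no parametrized decomposition over a family of balls is needed.

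The genuine gap is elsewhere: the theorem is stated for an arbitrary path-connected definable $X$, which need not be closed, while the pancake decomposition produces pieces whose closures $\overline{P_i}$ sit in $\overline{X}$ rather than in $X$. Your final estimate concatenates quasi-geodesics inside the sets $\overline{P_{i_j}}$, so what it actually bounds is $d_{\overline{X}}(a,b)$, not $d_X(a,b)$, and these need not be comparable: for $X=B(0,1)\setminus\bigl([0,1)\times\{0\}\bigr)$ and $a_\varepsilon=(1/2,\varepsilon)$, $b_\varepsilon=(1/2,-\varepsilon)$ one has $d_{\overline{X}}(a_\varepsilon,b_\varepsilon)=2\varepsilon\to 0$ while $d_X(a_\varepsilon,b_\varepsilon)\geq 1$. (The theorem still holds for this $X$ because $d_{X,diam}(a_\varepsilon,b_\varepsilon)$ is also bounded below, but your chain of inequalities does not detect this.) To repair the argument you must either produce a decomposition whose pieces are contained in $X$ and remain LNE after taking closures relative to $X$, or show that the connecting paths in $\overline{P_{i_j}}$ between points of $X$ can be pushed back into $X$ with only a bounded multiplicative loss of length; neither is automatic, and this is precisely where the non-closed case requires work. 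For closed $X$, your argument --- with the simplification indicated above --- is correct.
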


In order to know more about the o-minimal geometry, see, for instance, \cite{Coste:1999}, \cite{Lojasiewicz:1964}, \cite{Gabrielov:1968} and \cite{BierstoneM:2000}.

\section{H\"older invariants}

\subsection{H\"older invariance of the multiplicity of functions}

\begin{proposition}\label{prop:mult_inv_conjugation}
Let $f,g:(\C^n,0)\to \C^p$ be two germs of analytic mappings. If $k={\rm ord}_0(f)\not={\rm ord}_0(g)=m$ then $f$ and $g$ are not bi-$\alpha$-H\"older conjugated for any $1>\alpha>\min\{\frac{k}{m},\frac{m}{k}\}$. 
\end{proposition}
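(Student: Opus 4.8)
The plan is to argue by contradiction, using only two elementary metric consequences of the notion of order of an analytic map germ. Recall that if $\varphi\colon(\C^n,0)\to\C^p$ has ${\rm ord}_0(\varphi)=\ell$, then, writing each component as a convergent power series whose lowest-degree part has degree at least $\ell$, there is a constant $C>0$ with $|\varphi(y)|\le C|y|^{\ell}$ for all $y$ near $0$ (upper bound); moreover, some component $\varphi_i$ has ${\rm ord}_0(\varphi_i)=\ell$, so its degree-$\ell$ homogeneous part $\varphi_{i,\ell}$ does not vanish identically, and choosing $v\neq 0$ with $\varphi_{i,\ell}(v)\neq 0$ we get $|\varphi(tv)|\ge|\varphi_i(tv)|\ge c\,t^{\ell}$ for some $c>0$ and all small $t>0$ (lower bound along a ray). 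These two facts are all that will be needed.

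Now suppose, for contradiction, that there is a bi-$\alpha$-H\"older homeomorphism $H\colon(\C^n,0)\to(\C^n,0)$ with $f=g\circ H$. Since $f=g\circ H$ is equivalent to $g=f\circ H^{-1}$ and $H^{-1}$ is again bi-$\alpha$-H\"older, after possibly interchanging the roles of $f$ and $g$ (and of $H$ and $H^{-1}$) we may assume $k={\rm ord}_0(f)<{\rm ord}_0(g)=m$, so that $\min\{k/m,m/k\}=k/m$. Let $\lambda>0$ be a H\"older constant for $H$, so that $|H(x)|=|H(x)-H(0)|\le\lambda|x|^{\alpha}$ near $0$. Applying the upper bound for $g$ (which has order $m$) yields, for $x$ near $0$,
\begin{equation*}
|f(x)|=|g(H(x))|\le C\,|H(x)|^{m}\le C\lambda^{m}\,|x|^{\alpha m}.
\end{equation*}

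On the other hand, applying the lower bound for $f$ (which has order $k$), pick $v\neq 0$ and $c>0$ with $|f(tv)|\ge c\,t^{k}$ for all small $t>0$. Substituting $x=tv$ into the previous inequality gives $c\,t^{k}\le C\lambda^{m}|v|^{\alpha m}\,t^{\alpha m}$, i.e. $c\le C'\,t^{\alpha m-k}$ for all small $t>0$. Since $\alpha>k/m$ is equivalent to $\alpha m-k>0$, the right-hand side tends to $0$ as $t\to 0^{+}$, contradicting $c>0$; hence no such $H$ exists, which proves the proposition. I do not expect a genuine obstacle here: the argument is short, and the only points needing a little care are the passage from the vector-valued target to a single component realizing the order (for both the upper and the lower estimate) and the bookkeeping with $H^{-1}$ that lets one assume $k<m$ and thereby recover the symmetric exponent $\min\{k/m,m/k\}$.
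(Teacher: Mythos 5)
Your proof is correct and takes essentially the same route as the paper: the paper actually proves a slightly more general statement (replacing $H$ by any bijection $\varphi$ with $\frac{1}{C_1}|x|^{1/\alpha}\leq|\varphi(x)|\leq C_1|x|^{\alpha}$ and $|f|$ comparable to $|g\circ\varphi|$), but the contradiction is obtained exactly as you do it --- a lower bound $|f(tv)|\geq c\,t^{k}$ along a ray realizing the smaller order, against the upper bound $|g(\varphi(tv))|\leq C' t^{\alpha m}$ coming from the H\"older estimate and the order of $g$, with $\alpha m-k>0$. No gaps.
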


Based on the results presented in \cite{ComteMT:2002} and  \cite{Sampaio:2019}, we have the following, which is more general than Proposition \ref{prop:mult_inv_conjugation}:
\begin{proposition}
Let $f,g:(\R^n,0)\to \R^p$ be two germs of analytic mappings. Assume $k={\rm ord}_0(f)\not={\rm ord}_0(g)=m$. Then there is no $\alpha\in (r, 1)$, where $r:=\min\{\frac{k}{m},\frac{m}{k}\}$, such that there are open neighborhoods $U, W\subset \R^n$ of $0\in \R^n$, constants $C_1,C_2>0$ and a bijection $\varphi\colon U\to W$  such that 
\begin{enumerate}
\item [{\rm (1)}] $\frac{1}{C_1}|x|^{\frac{1}{\alpha}}\leq |\varphi(x)|\leq C_1|x|^{\alpha}$, for all $x\in U$;
\item [{\rm (2)}] $\frac{1}{C_2}|f(x)|\leq |g\circ\varphi(x)|\leq C_2|f(x)|$, for all $x\in U$.
\end{enumerate} 
\end{proposition}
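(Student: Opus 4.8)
The strategy is to reduce the statement to the known H\"older-invariance of the multiplicity (Proposition \ref{prop:mult_inv_conjugation}), recast in the abstract quantitative language of the conclusion, by exhibiting that a map $\varphi$ as in (1)--(2) would force $\mathrm{ord}_0(f)=\mathrm{ord}_0(g)$. The key point is a \L ojasiewicz-type estimate: for an analytic germ $h\colon(\R^n,0)\to\R^p$ with $\mathrm{ord}_0(h)=k$, there are constants $0<c\leq C$ and a neighborhood of the origin on which
\[
c\,|x|^{k}\;\leq\;\sup_{|u|\le |x|}|h(u)|\;\leq\;C\,|x|^{k},
\]
and moreover, since $h$ is not identically flat of order $k$, along a suitable approaching sequence (or generic line) one has the sharp lower bound $|h(x_j)|\geq c'|x_j|^{k}$ for some sequence $x_j\to 0$. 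I would extract exactly these two inequalities from the references \cite{ComteMT:2002} and \cite{Sampaio:2019}, which is where the analytic input of the proposition lives.

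First I would argue by contradiction: suppose $k=\mathrm{ord}_0(f)\ne m=\mathrm{ord}_0(g)$, say $k<m$ so that $r=k/m<1$, and suppose that for some $\alpha\in(r,1)$ we have $U,W,C_1,C_2,\varphi$ satisfying (1) and (2). Using (2) together with the order estimates for $f$ and $g$, I would convert (2) into comparisons of the form $|f(x)|\asymp|x|^{k}$ and $|g(\varphi(x))|\asymp|\varphi(x)|^{m}$ \emph{along an appropriate family of points}, being careful that the order estimate only gives a two-sided bound in a $\sup$ sense in general, not pointwise; this is handled by choosing the points where both $f$ and $g\circ\varphi$ realize their genuine order, which is possible because $\varphi$ is a bijection onto a neighborhood. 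Then combining with the size comparison (1), namely $|\varphi(x)|\leq C_1|x|^{\alpha}$ and $|\varphi(x)|\geq C_1^{-1}|x|^{1/\alpha}$, I obtain, along such points $x\to 0$,
\[
|x|^{k}\;\asymp\;|f(x)|\;\asymp\;|g(\varphi(x))|\;\asymp\;|\varphi(x)|^{m}\;\lesssim\;|x|^{\alpha m},
\]
which forces $k\geq \alpha m$, i.e. $\alpha\leq k/m=r$, contradicting $\alpha>r$. The symmetric use of the lower bound in (1) and the reversed inequalities handles the case $m<k$ (equivalently, one can run the whole argument with $f$ and $g$, $U$ and $W$ swapped, since $\varphi^{-1}$ satisfies the analogous estimates).

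The main obstacle, and the step deserving the most care, is the passage from the $\sup$-type \L ojasiewicz bounds to genuine pointwise comparisons valid along the \emph{same} sequence for both $f$ and $g\circ\varphi$ simultaneously; a priori the points where $|f|$ is large need not be the points where $|g\circ\varphi|$ is large. I expect to resolve this exactly as in \cite{ComteMT:2002}: restrict attention to a generic real line (or a generic real-analytic arc) through the origin, on which an analytic germ of order $k$ behaves like $t^{k}$ up to constants in \emph{both} directions, and exploit that $\varphi$ being a bijection onto a neighborhood maps such arcs to curves that are still ``radially controlled'' by (1). With that reduction in place the remaining inequalities are the elementary manipulation above, and the conclusion follows. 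I would also remark that Proposition \ref{prop:mult_inv_conjugation} is recovered by taking $\varphi=H$ (or $H^{-1}$) a bi-$\alpha$-H\"older conjugation, for which (1) is immediate from the H\"older bounds on $H$ and $H^{-1}$ and (2) holds with $C_2=1$ since $f=g\circ H$.
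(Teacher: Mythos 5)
Your argument is correct and essentially reproduces the paper's proof: one picks a direction $v$ along which $f$ realizes its order (so $|f(tv)|\gtrsim t^{{\rm ord}_0(f)}$), then the pointwise bound $|g(y)|\lesssim |y|^{{\rm ord}_0(g)}$ combined with (1) and (2) gives $|f(tv)|\lesssim t^{\alpha\,{\rm ord}_0(g)}=o(t^{{\rm ord}_0(f)})$ when $\alpha>r$, a contradiction. The ``main obstacle'' you flag does not actually arise: in your chain only the lower bound for $f$ (available on a generic line) and the upper bounds for $g$ and $\varphi$ (which hold pointwise everywhere) are used, so you never need both germs to realize their orders along the same sequence.
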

\begin{proof}
Assume that there is $\alpha\in (r, 1)$ such that there are open neighborhoods $U, W\subset \R^n$ of $0\in \R^n$, constants $C_1,C_2>0$ and a bijection $\varphi\colon U\to W$  such that 
\begin{enumerate}
\item [{\rm (1)}] $\frac{1}{C_1}|x|^{\frac{1}{\alpha}}\leq |\varphi(x)|\leq C_1|x|^{\alpha}$, for all $x\in U$;
\item [{\rm (2)}] $\frac{1}{C_2}|f(x)|\leq |g\circ\varphi(x)|\leq C_2|f(x)|$, for all $x\in U$.
\end{enumerate}
By changing $\varphi$ by $\varphi^{-1}$, if necessary, we may assume that $m<k$.
Since ${\rm ord}_0(f) =m$, there is some  $v\in \R^n\setminus \{0\}$ such that $L:=\lim \limits_{t\to 0^+}\frac{f(tv)}{t^m}\not =0$.
However, $t\mapsto \frac{g\circ \varphi (tv)}{t^{\alpha k}}$ is a bounded function around $0$. Since $k\alpha-m>0$, we obtain 
$$
\lim \limits_{t\to 0^+}\frac{|g\circ \varphi (tv)|}{t^{m}}=\lim \limits_{t\to 0^+}t^{k\alpha-m}\frac{|g\circ \varphi (tv)|}{t^{k\alpha}}=0.
$$
By using that
$$
\frac{1}{C_2}\frac{|f(tv)|}{t^m}\leq \frac{|g\circ \varphi (tv)|}{t^{\alpha k}}
$$
we obtain $L=0$, which is a contradiction.

Therefore, there is no such an $\alpha\in (r, 1)$.
\end{proof}

\begin{corollary}\label{cor:inv_mult}
Let $f,g\colon(\C^n,0)\to (\C,0)$ be two germs of analytic mappings. If $f$ and $g$ are H\"older equivalent, then ${\rm ord}_0(f)={\rm ord}_0(g)$.
\end{corollary}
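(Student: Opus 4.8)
The plan is to derive Corollary \ref{cor:inv_mult} directly from the previous proposition (the more general statement based on \cite{ComteMT:2002} and \cite{Sampaio:2019}), since that proposition is tailor-made to handle exactly the situation where a pair of germs is related by a map that is polynomially bicontrolled in the source and multiplicatively bicontrolled in the target. So the whole task reduces to a \emph{verification} step: if $f$ and $g$ are H\"older equivalent, then for every $\alpha\in(0,1)$ one can produce, out of the bi-$\alpha$-H\"older homeomorphism $H$ with $f=g\circ H$, the data $(U,W,C_1,C_2,\varphi)$ satisfying conditions (1) and (2) of the proposition.

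First I would fix an arbitrary $\alpha\in(0,1)$ and take a bi-$\alpha$-H\"older homeomorphism $H\colon(\C^n,0)\to(\C^n,0)$ with $f=g\circ H$, which exists by definition of $\sim_H$. Set $\varphi:=H$ (restricted to a small open neighborhood $U$ of $0$ on which $H$ is $\alpha$-H\"older with constant $\lambda$, and with $W:=H(U)$ on which $H^{-1}$ is $\alpha$-H\"older with constant $\mu$). For condition (2): since $f=g\circ H=g\circ\varphi$, we have $|f(x)|=|g\circ\varphi(x)|$ exactly, so (2) holds trivially with $C_2=1$. For condition (1): using $H(0)=0$ we get $|\varphi(x)|=|H(x)-H(0)|\le\lambda|x|^{\alpha}$, which is the upper bound; for the lower bound, write $|x|=|H^{-1}(\varphi(x))-H^{-1}(0)|\le\mu|\varphi(x)|^{\alpha}$, hence $|\varphi(x)|\ge(|x|/\mu)^{1/\alpha}=\mu^{-1/\alpha}|x|^{1/\alpha}$. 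Taking $C_1:=\max\{\lambda,\mu^{1/\alpha}\}$ gives (1). Thus the hypotheses of the proposition are met with this $\alpha$.

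Now suppose for contradiction that ${\rm ord}_0(f)=k\neq m={\rm ord}_0(g)$, and put $r:=\min\{k/m,m/k\}<1$. Choose any $\alpha\in(r,1)$ — this is possible precisely because $r<1$. By the previous paragraph, the bi-$\alpha$-H\"older conjugation supplied by H\"older equivalence produces $(U,W,C_1,C_2,\varphi)$ satisfying (1) and (2) for this particular $\alpha\in(r,1)$. But the proposition asserts that no such $\alpha\in(r,1)$ can exist. This contradiction forces $k=m$, i.e. ${\rm ord}_0(f)={\rm ord}_0(g)$.

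There is essentially no obstacle here: all the analytic content is already packaged in the preceding proposition, and the corollary is a clean specialization obtained by checking that a bi-$\alpha$-H\"older conjugacy is in particular the kind of bijection the proposition rules out. The only minor point worth stating carefully is that H\"older equivalence gives a conjugating homeomorphism for \emph{every} $\alpha\in(0,1)$, so in particular for one lying in the nonempty interval $(r,1)$; this is where the definition of $\sim_H$ (as opposed to a single fixed $\alpha$) is used. One could alternatively invoke Proposition \ref{prop:mult_inv_conjugation} in the same way, but routing through the more general statement keeps the argument uniform.
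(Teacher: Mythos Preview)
Your proof is correct and follows exactly the route the paper intends: the corollary is stated in the paper without proof, as an immediate consequence of the preceding proposition, and your argument supplies precisely the expected verification (that a bi-$\alpha$-H\"older conjugacy yields the data $(U,W,C_1,C_2,\varphi)$ of the proposition, and that H\"older equivalence lets one pick $\alpha\in(r,1)$). There is nothing to add.
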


\subsection{Special subsets preserved under bi-$\alpha$-H\"older conjugations}
Fix a single germ $g\colon(\mathbb{C}^{2},0)$ $\longrightarrow \left(\mathbb{C},0\right) \ $ and a point $p_{0}\in\mathbb{C}^{2}\ $ close to the origin, denote $c=g(p_{0})$. Let us fix a constant $K$ sufficiently large, which will be related to the $\alpha$-H\"{o}lder constant of a bi-$\alpha $-H\"{o}lder homeomorphism. Let $B(p_{0},\rho)$ denote the open ball centered at $p_{0}$ and of radius $\rho$. Denote
\begin{equation*}
X\left( p_{0},\rho \right) :=B\left( p_{0},\rho \right) \cap g^{-1}\left(
c\right).
\end{equation*}
Suppose that $p,q\in $ $X\left( p_{0},\rho \right) \ $ belong to the same connected component of $X(p_{0},K\rho ^{\alpha ^{2}})$ as $\ p_{0}$.
Let $d_{p_{0},\rho ,K,diam}(p,q)\ $ denote the diameter distance of $X(p_{0},K\rho ^{\alpha ^{2}})$ between $p$ and $q$.

Define
\begin{equation*}
\varphi _{i,\alpha}\left( p_{0},\rho ,K\right) :=\sup \dfrac{\left(
d_{p_{0},\rho ,K,diam}\left( p,q\right) \right) ^{1/\alpha ^{2-i}}}{\left\vert
p-q\right\vert ^{\alpha ^{2-i}}},\ i=0,1,2,
\end{equation*}%
where the supremum is taken over all pairs of points $p,q\in 
X(p_{0},\rho )\ $ in the connected component of $X(p_{0},K\rho ^{\alpha
^{2}}) $ that contains $p_{0}$.  The function $\varphi _{i,\alpha}$ ($i=0,1,2$) is not necessarily an increasing function
of $\rho $, so we define
\begin{equation*}
\psi _{i,\alpha}\left( p_{0},\rho ,K\right) :=\sup_{\rho'\leq \rho
}\varphi _{i,\alpha}\left( p_{0},\rho',K\right) ,\ i=0,1,2.
\end{equation*}
Finally, we define
\begin{equation*}
Y_{i,\alpha}(\rho ,K,A):=\{p;\psi _{i,\alpha}\left( p,\rho ,K\right) \geqslant A\},\
i=0,1,2.
\end{equation*}%
Intuitively speaking, $Y_{i,\alpha}(\rho ,K,A)$, for $A$ large, is the set of points where the curvature
of the levels $g^{-1}\left( c\right) $ is very large. We shall show that such $Y_{i,\alpha}(\rho ,K,A)$ are preserved by bi-$\alpha $-H\"{o}lder homeomorphisms.

Let $H\colon(\mathbb{C}^{2},0)\to (\mathbb{C}^{2},0)$ be the germ of a bi-$\alpha$-H\"{o}lder homeomorphism such that $H$ sends the levels of $\tilde{g}$ to the levels of $\ g$, where $g,\tilde{g}\colon(\mathbb{C}^{2},0)\to (\mathbb{C},0)$ are the germs of analytic functions. Fix $\ L\geqslant
1 $, a common $\alpha $-H\"{o}lder constant of $H$ and its inverse  $H^{-1}$. For $p_{0}\in\mathbb{C}^{2}$, we denote by $\tilde{p}_{0}=H\left( p_{0}\right)$, and similarly we add the tilde to distinguish the corresponding objects in the domain and the target space of $H$, that is, for instance,
\begin{equation*}
\tilde{Y}_{i,\alpha}(\rho ,K,A):=\{\tilde{p};\tilde\psi _{i,\alpha}\left( \tilde{p},\rho
,K\right) \geqslant A\},i=0,1,2,
\end{equation*}%
denotes a subset of the target of $H$.
\vspace{0,3cm}
\begin{lemma}\label{lemma0} 
If $K\geqslant L^{1+\tfrac{1}{\alpha }}$, then
\begin{equation*}
\tilde{Y}_{0,\alpha}(L^{-\tfrac{1}{\alpha }}\rho ^{\tfrac{1}{\alpha }},K,AL^{\alpha
+\tfrac{1}{\alpha ^{2}}})\subset H\left( Y_{1,\alpha}\left( \rho ,K,A\right)
\right) \subset \tilde{Y}_{2,\alpha}(L\rho ^{\alpha },K,AL^{-1-\tfrac{1}{\alpha }}).
\end{equation*}%
\end{lemma}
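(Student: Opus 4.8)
The plan is to chase points and paths through the homeomorphism $H$ and its inverse, carefully tracking how radii and Hölder constants transform at each step. The statement is essentially a quantitative bookkeeping lemma: the three nested sets correspond to the three ``levels'' of Hölder distortion encoded by the exponents $\alpha^2, \alpha^1, \alpha^0$, and the inclusions say that applying $H$ shifts one level up or down depending on the direction. The key observation is that an $\alpha$-Hölder map with constant $L$ converts a ball $B(p_0,\rho)$ into a set contained in $B(\tilde p_0, L\rho^\alpha)$, and conversely, via $H^{-1}$, a ball $B(\tilde p_0, r)$ pulls back into $B(p_0, L r^\alpha)$; iterating gives the pattern of $\rho^{\alpha^2}$ radii appearing throughout.

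\medskip

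For the \emph{left inclusion}, I would start with a point $\tilde p \in \tilde Y_{0,\alpha}(L^{-1/\alpha}\rho^{1/\alpha}, K, AL^{\alpha + 1/\alpha^2})$ and set $p = H^{-1}(\tilde p)$. The goal is to show $p \in Y_{1,\alpha}(\rho, K, A)$, i.e.\ that $\psi_{1,\alpha}(p,\rho,K) \ge A$. Since $\tilde\psi_{0,\alpha}(\tilde p, L^{-1/\alpha}\rho^{1/\alpha}, K) \ge AL^{\alpha+1/\alpha^2}$, there is a scale $\tilde\rho' \le L^{-1/\alpha}\rho^{1/\alpha}$ and points $\tilde p', \tilde q'$ in the relevant component of $\tilde X(\tilde p, K(\tilde\rho')^{\alpha^2})$ realizing (up to $\varepsilon$) a large value of $\varphi_{0,\alpha}$, meaning the diameter-distance ratio $(d_{\tilde p, \tilde\rho', K, diam}(\tilde p', \tilde q'))^{1/\alpha^2} / |\tilde p' - \tilde q'|^{1/\alpha^2}$ is large. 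Pulling $\tilde p', \tilde q'$ back by $H^{-1}$ to $p', q'$, I must check three things: (a) $p', q'$ land in $X(p, \rho')$ for the appropriate $\rho'$ — here is where $\tilde\rho' \le L^{-1/\alpha}\rho^{1/\alpha}$ together with the Hölder estimate on $H^{-1}$ gives $\rho' \le \rho$; (b) they lie in the connected component of $X(p, K(\rho')^{\alpha^2})$ containing $p$ — this uses $K \ge L^{1+1/\alpha}$ so that the $H^{-1}$-image of the ball $B(\tilde p, K(\tilde\rho')^{\alpha^2})$ is contained in $B(p, K(\rho')^{\alpha^2})$, and that $H^{-1}$ maps levels of $g$ to levels of $\tilde g$ (so connectedness is preserved); (c) the diameter distance does not shrink too much: $d_{p,\rho',K,diam}(p',q') \ge (d_{\tilde p, \tilde\rho', K, diam}(\tilde p',\tilde q')/L)^{1/\alpha}$ since $H$ is $\alpha$-Hölder with constant $L$ and maps paths to paths with controlled diameter. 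Combining (a)--(c) and the numerator/denominator exponents $1/\alpha$ versus $1/\alpha^2$ (hence the composite exponent shift and the factor $L^{\alpha+1/\alpha^2}$) yields $\varphi_{1,\alpha}(p,\rho',K) \ge A$, hence $\psi_{1,\alpha}(p,\rho,K) \ge A$.

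\medskip

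The \emph{right inclusion} is the mirror argument in the other direction: start with $p \in Y_{1,\alpha}(\rho, K, A)$, set $\tilde p = H(p)$, and show $\tilde\psi_{2,\alpha}(\tilde p, L\rho^\alpha, K) \ge AL^{-1-1/\alpha}$. One picks a scale $\rho' \le \rho$ and points $p', q'$ in the right component of $X(p, K(\rho')^{\alpha^2})$ nearly realizing $\psi_{1,\alpha}$, pushes them forward by $H$ to $\tilde p', \tilde q'$, and verifies: $\tilde p', \tilde q' \in \tilde X(\tilde p, \tilde\rho')$ with $\tilde\rho' \le L\rho^\alpha$ (Hölder estimate on $H$); they are in the component of $\tilde X(\tilde p, K(\tilde\rho')^{\alpha^2})$ through $\tilde p$ (again $K \ge L^{1+1/\alpha}$ controls the ball inclusion, now using $H^{-1}$ on the target side, plus level preservation); and $|\tilde p' - \tilde q'| \le L|p'-q'|^\alpha$ while $d_{\tilde p,\tilde\rho',K,diam}(\tilde p',\tilde q') \le L (d_{p,\rho',K,diam}(p',q'))^\alpha$. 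Here the numerator exponent in $\varphi_{2,\alpha}$ is $1/\alpha^0 = 1$ against denominator $1$, and tracking the $\alpha$-powers through gives the factor $L^{-1-1/\alpha}$.

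\medskip

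\textbf{The main obstacle}, as I see it, is item (b)/(c) in each direction — namely, controlling the \emph{connected component} and the \emph{diameter distance within that component} simultaneously. The delicate point is that when we pull back (or push forward) a pair of points, we need the connecting path that realizes the diameter distance to stay inside the image of the \emph{truncated} ball $B(\cdot, K\rho^{\alpha^2})$, and the bound $K \ge L^{1+1/\alpha}$ is exactly calibrated so that $H$ (resp.\ $H^{-1}$) maps the larger truncated ball on one side \emph{into} the corresponding truncated ball on the other side; this is where the exponent $1+1/\alpha$ comes from (one $\alpha$-Hölder estimate composed with the relation between $\rho^{\alpha^2}$ at the two scales). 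Once this ``ball-in-ball'' containment is established, level-set preservation by $H$ handles connectedness, and the diameter bounds follow from the $\alpha$-Hölder property applied to path images. The arithmetic of exponents — why exactly $1/\alpha^{2-i}$ and why the specific powers of $L$ and $A$ — is routine once the geometric containments are in place, so I would present those containments carefully and then let the inequalities fall out.
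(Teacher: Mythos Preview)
Your proposal follows essentially the same approach as the paper: transport witness points and paths through $H$ and $H^{-1}$, use the $\alpha$-H\"older estimates to convert between the radii $\rho$, $L^{-1/\alpha}\rho^{1/\alpha}$, and $L\rho^{\alpha}$, invoke $K\ge L^{1+1/\alpha}$ to secure the needed ball-in-ball containments for connected components (this is exactly the paper's chain $\tilde X(\tilde p_0,L^{-1/\alpha}\rho^{1/\alpha})\subset H(X(p_0,\rho))\subset \tilde X(\tilde p_0,KL^{-1/\alpha}\rho^{\alpha})\subset H(X(p_0,K\rho^{\alpha^2}))$), and then read off the pointwise $\psi$-inequalities that yield the two set inclusions.

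One slip to correct: in your sketch of the right inclusion you write $d_{\tilde p,\tilde\rho',K,diam}(\tilde p',\tilde q')\le L\bigl(d_{p,\rho',K,diam}(p',q')\bigr)^{\alpha}$, but that inequality goes the wrong way for what you need. To show $\tilde\varphi_{2,\alpha}(\tilde p,\tilde\rho',K)$ is large you must \emph{lower}-bound the numerator, i.e.\ you need $d_{\tilde p,\tilde\rho',K,diam}(\tilde p',\tilde q')\ge \bigl(d_{p,\rho',K,diam}(p',q')/L\bigr)^{1/\alpha}$, obtained by pulling back an arbitrary path on the target side via $H^{-1}$ (exactly your step~(c) from the left inclusion with the roles of $H$ and $H^{-1}$ exchanged). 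The paper does not spell this out either; it dispatches the second inequality with ``Similarly, we obtain\ldots''.
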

\begin{proof}
Since $H$ is bi-$\alpha $-H\"{o}lder, we have%
\begin{equation*}
\tilde{X}(\tilde{p}_{0},L^{-\tfrac{1}{\alpha }}\rho ^{\tfrac{1}{\alpha }%
})\subset H\left( X\left( p_{0},\rho \right) \right) \subset \tilde{X}\left(
\tilde{p}_{0},L\rho ^{\alpha }\right).
\end{equation*}%
Hence, if $K\geqslant L^{1+\tfrac{1}{\alpha }},$%
\begin{equation}\label{eq2.7}
\tilde{X}(\tilde{p}_{0},L^{-\tfrac{1}{\alpha }}\rho ^{\tfrac{1}{\alpha }%
})\subset H\left( X\left( p_{0},\rho \right) \right) \subset \tilde{X}(%
\tilde{p}_{0},kL^{-\tfrac{1}{\alpha }}\rho ^{\alpha })\subset
H(X(p_{0},K\rho ^{\alpha ^{2}})).
\end{equation}%
If $\tilde{p}_{0},\tilde{p}$ $\in \tilde{X}(\tilde{p}_{0},L^{-\tfrac{1}{%
\alpha }}\rho ^{\tfrac{1}{\alpha }})\ $ are in the same connected component of $\tilde{X}(\tilde{p}_{0},kL^{-\tfrac{1}{\alpha }}\rho ^{\alpha })$
then $p_{0}\  \mbox{and} \ p=H^{-1}\left( \tilde{p}\right) \ $ are in the same connected component of $X(p_{0},K\rho ^{\alpha ^{2}})$. Hence, (\ref{eq2.7}) implies
\begin{equation*}
\psi _{1,\alpha}\left( p_{0},\rho ,K\right) \geq L^{-\alpha -\tfrac{1}{\alpha ^{2}}%
}\tilde\psi _{0,\alpha}(\tilde{p}_{0},L^{-\tfrac{1}{\alpha }}\rho ^{\tfrac{1}{\alpha }},K).
\end{equation*}%
In fact, given\ $\tilde{p}_{1},\tilde{p}_{2}\in \tilde{X}(\tilde{p}_{0},kL^{-%
\tfrac{1}{\alpha }}\rho ^{\alpha })$ in the same connected component that contains $\tilde{p}_{0}.$ Let $p_{1}=H^{-1}\left( \tilde{p}_{1}\right) ,p_{2}=H^{-1}\left(\tilde{p}_{2}\right) $ and $\gamma \colon\left[ a,b\right] \rightarrow
X(p_{0},K\rho ^{\alpha ^{2}})$ given by $\gamma \left( a\right) =p_{1}\mbox{ and }\gamma \left( b\right) =p_{2}$ joining $p_{1}\mbox{ and } p_{2}.$ Let $\beta =H\circ
\gamma \colon\left[ a.b\right] \rightarrow \tilde{X}(\tilde{p}_{0},kL^{-\tfrac{1}{%
\alpha }}\rho ^{\alpha })$ given by $\beta \left( a\right) =H\left( \gamma
\left( a\right) \right) =H\left( p_{1}\right) =\tilde{p}_{1}\mbox{ and } \beta
\left( b\right) =H\left( \gamma \left( b\right) \right) =H\left(
p_{2}\right) =\tilde{p}_{2}$ joining $\tilde{p}_{1}\mbox{ and } \tilde{p}_{2}$.
Hence,
\begin{eqnarray*}
d_{\tilde{p}_{0},L^{-1/\alpha }\rho ^{1/\alpha },K,diam}\left( \tilde{p}_{1},%
\tilde{p}_{2}\right) &=&\inf_{\mu }diam\left( \mu \right) \leq
diam\left( \beta \right) =\sup \left\vert \beta \left( t_{i}\right) -\beta
\left( t_{j}\right) \right\vert \\
&=&\sup \left\vert H\left( \gamma \left( t_{i}\right) \right) -H\left(
\gamma \left( t_{j}\right) \right) \right\vert \\
&\leq &L\sup \left\vert \gamma \left( t_{i}\right) -\gamma \left(
t_{j}\right) \right\vert ^{\alpha }=L\left( diam\left( \gamma \right)
\right) ^{\alpha }.
\end{eqnarray*}%
So, \begin{equation}\label{eq2.6} 
d_{\tilde{p}_{0},L^{-1/\alpha }\rho ^{1/\alpha },K,diam}\left(
\tilde{p}_{1},\tilde{p}_{2}\right) \leq L\left( d_{\tilde{p}_{0},\rho
,K,diam}\left( p_{1},p_{2}\right) \right) ^{\alpha }.
\end{equation}
Thus,
\begin{equation*}
(d_{\tilde{p}_{0},L^{-1/\alpha }\rho ^{1/\alpha },K,diam}\left( \tilde{p}_{1},
\tilde{p}_{2}\right) )^{\tfrac{1}{\alpha ^{2}}}\leq L^{\tfrac{1}{\alpha ^{2}}
}\left( d_{\tilde{p}_{0},\rho ,K,diam}\left( p_{1},p_{2}\right) \right) ^{
\tfrac{1}{\alpha }}.
\end{equation*}
See also that
\begin{eqnarray*}
\left\vert p_{1}-p_{2}\right\vert &\leq &L\left\vert \tilde{p}_{1}-\tilde{p}
_{2}\right\vert ^{\alpha }\Rightarrow \left\vert p_{1}-p_{2}\right\vert
^{\alpha }\leq L^{\alpha }\left\vert \tilde{p}_{1}-\tilde{p}_{2}\right\vert
^{\alpha ^{2}} \\
&\Rightarrow &\dfrac{1}{\left\vert \tilde{p}_{1}-\tilde{p}_{2}\right\vert
^{\alpha ^{2}}}\leq \dfrac{L^{\alpha }}{\left\vert p_{1}-p_{2}\right\vert
^{\alpha }}.
\end{eqnarray*}
Then, $\dfrac{(d_{\tilde{p}_{0},L^{-1/\alpha }\rho ^{1/\alpha
},K,diam}\left( \tilde{p}_{1},\tilde{p}_{2}\right) )^{\tfrac{1}{\alpha ^{2}}}}{%
\left\vert \tilde{p}_{1}-\tilde{p}_{2}\right\vert ^{\alpha ^{2}}}$ $\leq
L^{\alpha +\tfrac{1}{\alpha ^{2}}}\dfrac{\left( d_{\tilde{p}_{0},\rho
,K,diam}\left( p_{1},p_{2}\right) \right) ^{\tfrac{1}{\alpha }}}{\left\vert
p_{1}-p_{2}\right\vert ^{\alpha }}$.\\
\vspace{0,1cm}\\
Hence,
\begin{equation*}
L^{-\alpha -\tfrac{1}{\alpha ^{2}}}\tilde\psi _{0,\alpha}(\tilde{p}_{0},L^{-\tfrac{1}{%
\alpha }}\rho ^{\tfrac{1}{\alpha }},K)\leq \psi _{1,\alpha}\left( p_{0},\rho
,K\right) .
\end{equation*}%

Similarly, we obtain the following
\begin{equation*}
\psi _{1,\alpha}\left( p_{0},\rho ,K\right) \leq L^{1+\tfrac{1}{\alpha }}\tilde\psi
_{2,\alpha}\left( \tilde{p}_{0},L\rho ^{\alpha },K\right) .
\end{equation*}%
\end{proof}

\begin{lemma}\label{lemma-semcite} 
If $K\geqslant L^{1+\tfrac{1}{\alpha }}$, then
\begin{eqnarray*}
\tilde{Y}_{0,\alpha}(L^{-\tfrac{1}{\alpha }}\rho ^{\tfrac{1}{\alpha }},K,AL^{\alpha
+\tfrac{1}{\alpha ^{2}}})&\subset&  \tilde{Y}_{1,\alpha}\left( L\rho^{\alpha} ,K,AL^{1-\tfrac{1}{\alpha^{2}}}\right) \\
&\subset& \tilde{Y}_{2,\alpha}(L^{1+\alpha}\rho ^{\alpha^{3}},K,AL^{-\tfrac{2}{\alpha}-\tfrac{1}{\alpha^{2} }-\tfrac{1}{\alpha^{3}}}).
\end{eqnarray*}%
\end{lemma}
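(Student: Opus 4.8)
The plan is to deduce Lemma~\ref{lemma-semcite} by combining Lemma~\ref{lemma0} with itself, applied to $H$ in a ``shifted'' regime: the point is that Lemma~\ref{lemma0} relates $\tilde Y_{0,\alpha}$, $H(Y_{1,\alpha})$, and $\tilde Y_{2,\alpha}$ at nested scales, and Lemma~\ref{lemma-semcite} is an entirely ``tilde-side'' statement, so we should be able to eliminate the domain-side sets $Y_{i,\alpha}$ by running Lemma~\ref{lemma0} a second time with the roles adjusted. Concretely, the first inclusion of Lemma~\ref{lemma0} gives
\begin{equation*}
\tilde Y_{0,\alpha}\bigl(L^{-1/\alpha}\rho^{1/\alpha},K,AL^{\alpha+1/\alpha^{2}}\bigr)\subset H\bigl(Y_{1,\alpha}(\rho,K,A)\bigr),
\end{equation*}
so it suffices to show
\begin{equation*}
H\bigl(Y_{1,\alpha}(\rho,K,A)\bigr)\subset \tilde Y_{1,\alpha}\bigl(L\rho^{\alpha},K,AL^{1-1/\alpha^{2}}\bigr),
\end{equation*}
and then to iterate once more to land inside $\tilde Y_{2,\alpha}$. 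For the last step I would not prove anything new: apply the already-established second inclusion of Lemma~\ref{lemma0}, but with the scale and constant produced by the middle set, i.e.\ replace the triple $(\rho,K,A)$ by $(L\rho^{\alpha},K,AL^{1-1/\alpha^{2}})$; chasing the substitution $\rho\mapsto L\rho^{\alpha}$ through $L\rho^{\alpha}\mapsto L(L\rho^{\alpha})^{\alpha}=L^{1+\alpha}\rho^{\alpha^{2}}$ and the exponent bookkeeping on $A$ should produce exactly the stated scale $L^{1+\alpha}\rho^{\alpha^{3}}$ and constant $AL^{-2/\alpha-1/\alpha^{2}-1/\alpha^{3}}$ in the conclusion, up to a harmless monotonicity adjustment in $\rho$ using that $\psi_{i,\alpha}$ is nondecreasing in $\rho$.

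The genuine content therefore lies in the middle inclusion $H(Y_{1,\alpha}(\rho,K,A))\subset \tilde Y_{1,\alpha}(L\rho^{\alpha},K,AL^{1-1/\alpha^{2}})$. First I would unwind what membership in $H(Y_{1,\alpha}(\rho,K,A))$ means: if $\tilde p_0=H(p_0)$ with $\psi_{1,\alpha}(p_0,\rho,K)\ge A$, then there exist $\rho'\le\rho$ and points $p,q\in X(p_0,\rho')$ in the $p_0$-component of $X(p_0,K\rho'^{\alpha^{2}})$ with $(d_{p_0,\rho',K,diam}(p,q))^{1/\alpha}/|p-q|^{\alpha}$ close to $A$. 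Pushing forward by $H$, the images $\tilde p=H(p),\tilde q=H(q)$ lie in $\tilde X(\tilde p_0,L\rho'^{\alpha})$ and — using $K\ge L^{1+1/\alpha}$ exactly as in the proof of Lemma~\ref{lemma0}, so that $\tilde X(\tilde p_0,L\rho'^{\alpha})\subset H(X(p_0,K\rho'^{\alpha^{2}}))$ — in the same connected component of $\tilde X(\tilde p_0, K(L\rho'^{\alpha})^{\alpha^{2}})$ as $\tilde p_0$; hence this pair is admissible in the definition of $\tilde\varphi_{1,\alpha}(\tilde p_0,L\rho'^{\alpha},K)$. The remaining task is the estimate: from $|\tilde p-\tilde q|\le L|p-q|^{\alpha}$ one gets $|\tilde p-\tilde q|^{\alpha}\le L^{\alpha}|p-q|^{\alpha^{2}}$, and the diameter-distance inequality~(\ref{eq2.6}) (applied here in the form $d_{\tilde p_0,L\rho'^{\alpha},K,diam}(\tilde p,\tilde q)\le L(d_{p_0,\rho',K,diam}(p,q))^{\alpha}$) gives $(d_{\tilde p_0,\ldots,diam}(\tilde p,\tilde q))^{1/\alpha}\le L^{1/\alpha}(d_{p_0,\rho',K,diam}(p,q))$. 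Dividing, the ratio defining $\tilde\varphi_{1,\alpha}$ is bounded below by $L^{-1/\alpha}\cdot L^{-\alpha}$ (no: by the \emph{reciprocal} combination) times the ratio for $\varphi_{1,\alpha}$ at $p_0$; carefully tracking which side of each inequality one needs — we want a \emph{lower} bound on $\tilde\psi_{1,\alpha}$ — yields $\tilde\psi_{1,\alpha}(\tilde p_0,L\rho^{\alpha},K)\ge L^{-(1/\alpha+\alpha)}\,\psi_{1,\alpha}(p_0,\rho,K)^{?}$, and matching the exponent $1-1/\alpha^{2}$ against $\alpha+1/\alpha^{2}$ forces the precise powers of $L$ stated.

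The step I expect to be the main obstacle is exactly this exponent accounting in the middle inclusion: the definition of $\varphi_{i,\alpha}$ uses the power $\alpha^{2-i}$, so passing from index $1$ in the domain to index $1$ in the target does not simply reuse the computation in Lemma~\ref{lemma0} (which went from index $0$ to index $1$ and from index $1$ to index $2$); one must redo the $L$-bookkeeping with the ratio raised to $1/\alpha$ on both numerator and denominator rather than to $1/\alpha^{2}$ versus $\alpha^{2}$. I would handle this by writing $d_{p_0,\rho',K,diam}(p,q)=:D$, $|p-q|=:e$, so the domain ratio is $D^{1/\alpha}/e^{\alpha}\ge A$, while the target ratio is at least $(L^{-1}D^{\alpha})^{1/\alpha^{2}}/(L^{\alpha}e^{\alpha^{2}})^{?}$ — keeping the scales honest and using $d_{X,diam}\le d_X$ only where needed — and then simply read off the worst-case power of $L$; the two iterations of Lemma~\ref{lemma0} for the outer inclusions are then routine substitutions, and a final appeal to monotonicity of $\psi_{i,\alpha}$ in $\rho$ cleans up any discrepancy between $L^{1+\alpha}\rho^{\alpha^{3}}$ and the scale that the literal composition produces.
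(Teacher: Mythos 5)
Your overall plan has a genuine gap, concentrated exactly where you predicted it would be. The middle inclusion you reduce everything to, $H(Y_{1,\alpha}(\rho,K,A))\subset \tilde Y_{1,\alpha}(L\rho^{\alpha},K,AL^{1-1/\alpha^{2}})$, is a \emph{same-index} comparison across $H$, and the H\"older estimates cannot deliver it: writing $D=d_{p_0,\rho,K,diam}(p,q)$, $e=|p-q|$ and $\tilde D,\tilde e$ for the images, the available bounds are $\tilde D\geq (D/L)^{1/\alpha}$ and $\tilde e\leq Le^{\alpha}$, which give
$\tilde D^{1/\alpha}/\tilde e^{\alpha}\geq L^{-1/\alpha^{2}-\alpha}\,D^{1/\alpha^{2}}/e^{\alpha^{2}}$,
i.e.\ the target index-$1$ ratio is controlled from below by the \emph{domain index-$0$} ratio, not the index-$1$ ratio. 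Since $D^{1/\alpha^{2}}/e^{\alpha^{2}}=(D^{1/\alpha}/e^{\alpha})^{\alpha}\cdot D^{1/\alpha^{2}-1}$ with $D^{1/\alpha^{2}-1}\to 0$, the hypothesis $\psi_{1,\alpha}(p_0,\rho,K)\geq A$ gives no uniform lower bound here. This is precisely why Lemma~\ref{lemma0} shifts the index by one at each passage through $H$; a passage that preserves the index is not obtainable this way. The same defect sinks your last step: the second inclusion of Lemma~\ref{lemma0} takes $H(Y_{1,\alpha}(\cdot))$ as its left-hand side, and after your middle step you only hold membership in $\tilde Y_{1,\alpha}(\cdot)$, for which no inclusion into some $H(Y_{1,\alpha}(\cdot))$ is available. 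So neither inclusion of the lemma follows by set-theoretic composition of Lemma~\ref{lemma0} with itself.

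The paper's proof is structured differently. It obtains the containment in $\tilde Y_{2,\alpha}$ directly from Lemma~\ref{lemma0} as $\tilde Y_{0,\alpha}\subset H(Y_{1,\alpha})\subset \tilde Y_{2,\alpha}$, and then proves $\tilde Y_{0,\alpha}\subset \tilde Y_{1,\alpha}$ by one chained estimate entirely on the tilde side: using (\ref{eq2.6}) for $H$ and its analogue for $H^{-1}$ it gets
$(\tilde D_{\rm small})^{1/\alpha^{2}}\leq L^{1/\alpha+1/\alpha^{2}}\,\tilde D_{\rm large}$ with $\tilde D_{\rm large}=d_{\tilde p_0,L\rho^{\alpha},K,diam}(\tilde p_1,\tilde p_2)$, keeps the denominator $\tilde e^{\alpha^{2}}$ fixed, and then absorbs the index shift via the identity $\tilde D_{\rm large}/\tilde e^{\alpha^{2}}=\bigl(\tilde D_{\rm large}^{1/\alpha}/\tilde e^{\alpha}\bigr)^{\alpha}$, arriving at $\tilde\psi_{0,\alpha}(\cdot)\leq L^{1/\alpha+1/\alpha^{2}}\bigl(\tilde\psi_{1,\alpha}(\cdot)\bigr)^{\alpha}$ and finishing with $A\leq A^{1/\alpha}$. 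That exponent-$\alpha$ trick on the whole ratio is the idea missing from your argument; without it the ``exponent accounting'' you flag as the main obstacle cannot be made to close. (You would also need an analogous direct estimate, not an iteration of Lemma~\ref{lemma0}, to get $\tilde Y_{1,\alpha}\subset \tilde Y_{2,\alpha}$; note the paper's own proof only establishes $\tilde Y_{0,\alpha}\subset\tilde Y_{2,\alpha}$ and $\tilde Y_{0,\alpha}\subset\tilde Y_{1,\alpha}$.)
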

\begin{proof}
From Lemma \ref{lemma0}, we have $\tilde{Y}_{0,\alpha} \subset \tilde{Y}_{2,\alpha}$. Now, let us show that  $\tilde{Y}_{0,\alpha} \subset \tilde{Y}_{1,\alpha}$. More precisely, $\tilde{Y}_{0,\alpha}(L^{-\tfrac{1}{\alpha }}\rho ^{\tfrac{1}{\alpha }},K,AL^{\alpha
+\tfrac{1}{\alpha ^{2}}})\subset  \tilde{Y}_{1,\alpha}\left( L\rho^{\alpha} ,K,AL^{1-\tfrac{1}{\alpha^{2}}}\right)$.
In a similar way to the Inequality (\ref{eq2.6}), we have 
$$
d_{{p}_{0},\rho,K,diam}\left({p}_{1},{p}_{2}\right) \leq L\left( d_{\tilde{p}_{0},L\rho^{\alpha},K,diam}\left( \tilde{p}_{1},\tilde{p}_{2}\right) \right) ^{\alpha }.
$$
Thus,
\begin{eqnarray*}
(d_{\tilde{p}_{0},L^{-1/\alpha }\rho ^{1/\alpha },K,diam}\left( \tilde{p}_{1},%
\tilde{p}_{2}\right) )^{\tfrac{1}{\alpha ^{2}}}&\leq& L^{\tfrac{1}{\alpha ^{2}}%
}\left( d_{{p}_{0},\rho ,K,diam}\left( p_{1},p_{2}\right) \right) ^{%
\tfrac{1}{\alpha }}\\
&\leq& L^{\tfrac{1}{\alpha}+\tfrac{1}{\alpha ^{2}}%
} d_{\tilde{p}_{0},L \rho^{\alpha} ,K,diam}\left( \tilde{p}_{1},\tilde{p}_{2}\right).
\end{eqnarray*}%
Then, 
\begin{eqnarray*}
\dfrac{(d_{\tilde{p}_{0},L^{-1/\alpha }\rho ^{1/\alpha
},K,diam}\left( \tilde{p}_{1},\tilde{p}_{2}\right) )^{\tfrac{1}{\alpha ^{2}}}}{%
\left\vert \tilde{p}_{1}-\tilde{p}_{2}\right\vert ^{\alpha ^{2}}} &\leq&
L^{\tfrac{1}{\alpha} +\tfrac{1}{\alpha ^{2}}}\dfrac{ d_{\tilde{p}_{0},L\rho^{\alpha},K,diam}\left( \tilde{p}_{1},\tilde{p}_{2}\right)}{\left\vert
\tilde{p}_{1}-\tilde{p}_{2}\right\vert ^{\alpha^{2} }}\\
&\leq&
L^{\tfrac{1}{\alpha} +\tfrac{1}{\alpha ^{2}}}\left( \dfrac{\left( d_{\tilde{p}_{0},L\rho^{\alpha},K,diam}\left( \tilde{p}_{1},\tilde{p}_{2}\right) \right) ^{\tfrac{1}{\alpha }}} {\left\vert
\tilde{p}_{1}-\tilde{p}_{2}\right\vert ^{\alpha }}\right)^{\alpha}.
\end{eqnarray*}
Hence,
\begin{equation*}
L^{-\tfrac{1}{\alpha} -\tfrac{1}{\alpha ^{2}}}\tilde\psi _{0,\alpha}(\tilde{p}_{0},L^{-\tfrac{1}{%
\alpha }}\rho ^{\tfrac{1}{\alpha }},K)\leq \left( \tilde{\psi} _{1,\alpha}\left( \tilde{p}_{0},L\rho^{2}
,K\right)\right) ^{\alpha} .
\end{equation*}%
$\ $ 
Since $L^{\alpha+\tfrac{1}{\alpha ^{2}}}A < \tilde\psi _{0,\alpha}(\tilde{p}_{0},L^{-\tfrac{1}{%
\alpha }}\rho ^{\tfrac{1}{\alpha }},K)$, we have
\begin{equation*}
L^{-\tfrac{1}{\alpha} -\tfrac{1}{\alpha ^{2}}}L^{\alpha+\tfrac{1}{\alpha ^{2}}}A <  \left( \tilde{\psi} _{1,\alpha}\left( \tilde{p}_{0},L\rho^{2}
,K\right)\right) ^{\alpha} .
\end{equation*}%
Therefore,
\begin{equation*}
L^{1-\tfrac{1}{\alpha^{2} }}A  <  \left( L^{\alpha-\tfrac{1}{\alpha}}\right) ^{\tfrac{1}{\alpha}}{A}^{\tfrac{1}{\alpha}}  < \tilde\psi_{1,\alpha}\left( \tilde{p}_{0},L\rho ^{\alpha },K\right) .
\end{equation*}%
\end{proof}

A similar argument shows the following:

\begin{lemma}\label{lema2.2}
Let $\delta >0\ $ and denote 
$$
Y_{i,\alpha}(\delta ,M,K,A):=\{p;\psi
_{i,\alpha}(p,M^{\tfrac{1}{\alpha ^{8-2i}}}\left\vert p\right\vert ^{\tfrac{%
1+\delta }{\alpha ^{8-2i}}},K)\geqslant A\},\ i=0,1,2.
$$
If $\ K\geqslant L^{1+%
\tfrac{1}{\alpha }+2\tfrac{1+\delta }{\alpha ^{5}}},\ $ then
\begin{eqnarray*}
i) \ \ \tilde{Y}_{0,\alpha}(\delta ,L^{-\tfrac{1}{\alpha }-\tfrac{1+\delta }{\alpha ^{8}}%
}M^{\tfrac{1}{\alpha ^{8}}},K,AL^{\alpha +\tfrac{1}{\alpha ^{2}}}) &\subset
&H(Y_{1,\alpha}(\delta ,M^{\tfrac{1}{\alpha ^{6}}},K,A)) \\
&\subset &\tilde{Y}_{2,\alpha}(\delta ,L^{1+\tfrac{1+\delta }{\alpha ^{5}}}M^{%
\tfrac{1}{\alpha ^{4}}},K,AL^{-1-\tfrac{1}{\alpha }}).
\end{eqnarray*}%
\begin{eqnarray*}
ii) \ \ \tilde{Y}_{0,\alpha}(\delta ,L^{-\tfrac{1}{\alpha }-\tfrac{1+\delta }{\alpha ^{8}}%
}M^{\tfrac{1}{\alpha ^{8}}},K,AL^{\alpha +\tfrac{1}{\alpha ^{2}}}) &\subset &
H(Y_{1,\alpha}(\delta ,M^{\tfrac{1}{\alpha ^{6}}},K,A)) \\
 &\subset & \tilde{Y}_{2,\alpha}(\delta ,L^{1+\tfrac{1+\delta }{\alpha ^{5}}}M^{%
\tfrac{1}{\alpha ^{4}}},K,AL^{-1-\tfrac{1}{\alpha }}).
\end{eqnarray*}%
\end{lemma}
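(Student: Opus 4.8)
The plan is to rerun, essentially verbatim, the arguments of Lemma~\ref{lemma0} and Lemma~\ref{lemma-semcite}; the only genuinely new feature is that in the definition of $Y_{i,\alpha}(\delta,M,K,A)$ the ball radius $M^{1/\alpha^{8-2i}}|p|^{(1+\delta)/\alpha^{8-2i}}$ now depends on the base point $p$ through $|p|$, so one must also control how the bi-$\alpha$-H\"older homeomorphism $H$ distorts $|p|$. Applying the $\alpha$-H\"older condition of $H$ and of $H^{-1}$ to the pair $(p,0)$ gives, writing $\tilde{p}=H(p)$,
\begin{equation*}
L^{-\tfrac{1}{\alpha}}|\tilde{p}|^{\tfrac{1}{\alpha}}\le |p|\le L|\tilde{p}|^{\alpha}
\qquad\text{and}\qquad
L^{-\tfrac{1}{\alpha}}|p|^{\tfrac{1}{\alpha}}\le |\tilde{p}|\le L|p|^{\alpha},
\end{equation*}
and these comparisons are exactly what is needed to trade the source radius for a target radius, and conversely.

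First I would isolate the pointwise estimates already established inside the proofs of Lemma~\ref{lemma0} and Lemma~\ref{lemma-semcite}: for every base point $p_{0}$, every $\rho>0$, and $K\ge L^{1+1/\alpha}$,
\begin{equation*}
L^{-\alpha-\tfrac{1}{\alpha^{2}}}\tilde\psi_{0,\alpha}\!\left(\tilde{p}_{0},L^{-\tfrac{1}{\alpha}}\rho^{\tfrac{1}{\alpha}},K\right)\le\psi_{1,\alpha}\left(p_{0},\rho,K\right)\le L^{1+\tfrac{1}{\alpha}}\tilde\psi_{2,\alpha}\!\left(\tilde{p}_{0},L\rho^{\alpha},K\right),
\end{equation*}
together with the analogous comparison between $\tilde\psi_{0,\alpha}$ and $\tilde\psi_{1,\alpha}$ used in Lemma~\ref{lemma-semcite}. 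Then I would substitute $\rho=M^{1/\alpha^{6}}|p|^{(1+\delta)/\alpha^{6}}$, the radius occurring in $Y_{1,\alpha}(\delta,M,K,A)$, into these inequalities. The radii appearing on the two sides, $L^{-1/\alpha}\rho^{1/\alpha}$ and $L\rho^{\alpha}$, then carry the factors $|p|^{(1+\delta)/\alpha^{7}}$ and $|p|^{(1+\delta)/\alpha^{5}}$; using that $\psi_{0,\alpha}$ and $\psi_{2,\alpha}$ are non-decreasing in the radius argument (they are suprema over $\rho'\le\rho$) together with the bounds $|p|\ge L^{-1/\alpha}|\tilde{p}|^{1/\alpha}$ and $|p|\le L|\tilde{p}|^{\alpha}$ turns these into $|\tilde{p}|^{(1+\delta)/\alpha^{8}}$ and $|\tilde{p}|^{(1+\delta)/\alpha^{4}}$, which are precisely the radii entering the definitions of $\tilde{Y}_{0,\alpha}$ and $\tilde{Y}_{2,\alpha}$. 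The step $8-2i$ in the exponents is exactly designed so these powers of $|\tilde{p}|$ match at both ends of each inclusion; reading off the accompanying powers of $L$ then gives the stated multipliers of $M$ and $A$.

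It remains to verify the hypothesis on $K$. In Lemma~\ref{lemma0}, $K\ge L^{1+1/\alpha}$ was what guaranteed the inclusion chain~(\ref{eq2.7}), i.e.\ that the connected component of $X(p_{0},K\rho^{\alpha^{2}})$ through $p_{0}$ captures the corresponding component in the target. Redoing that step with the base-point-dependent radius forces $K\rho^{\alpha^{2}}$ to dominate the relevant target radius after exchanging $|p|$ and $|\tilde{p}|$, and the surplus factor produced by $|p|^{(1+\delta)/\alpha^{5}}$ versus $|\tilde{p}|^{(1+\delta)/\alpha^{5}}$ is what sharpens the requirement to $K\ge L^{1+1/\alpha+2(1+\delta)/\alpha^{5}}$. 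Once all of this is in place, combining the two pointwise estimates exactly as Lemma~\ref{lemma-semcite} combines the inclusions of Lemma~\ref{lemma0} yields the displayed chains of inclusions, parts (i) and (ii) being obtained by the same computation with, respectively, the $\tilde\psi_{0,\alpha}$–$\tilde\psi_{2,\alpha}$ and the $\tilde\psi_{0,\alpha}$–$\tilde\psi_{1,\alpha}$ comparisons.

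I expect the only real obstacle to be the bookkeeping. One must keep the tower of $\alpha^{-k}$ exponents and the associated $L$-powers consistent across the three sets $Y_{0,\alpha}$, $Y_{1,\alpha}$, $Y_{2,\alpha}$ and across both directions $H$ and $H^{-1}$, since each use of the H\"older condition simultaneously rescales the radius by a power of $\alpha^{\pm1}$ and shifts the $|p|\leftrightarrow|\tilde{p}|$ exponent. There is no conceptual difficulty beyond what already appears in Lemmas~\ref{lemma0}--\ref{lemma-semcite}; it is a matter of carrying the scaling constants through with care.
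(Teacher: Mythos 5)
Your proposal takes essentially the same route as the paper: the paper offers no written proof of this lemma beyond the remark ``A similar argument shows the following,'' referring back to Lemmas~\ref{lemma0} and~\ref{lemma-semcite}, and your plan --- rerun those arguments, using the bi-$\alpha$-H\"older bounds $L^{-1/\alpha}|\tilde{p}|^{1/\alpha}\le|p|\le L|\tilde{p}|^{\alpha}$ to convert the base-point-dependent radii and then track the resulting powers of $L$, $M$, $|p|$ and the strengthened hypothesis on $K$ --- is exactly the adaptation the authors intend. Your writeup is in fact more explicit than the paper's about where the new exponent bookkeeping enters.
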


\subsection{The Henry-Parunsinski Invariant}\label{subsec:HP_inv} 

In order to distinguish bi-Lipschitz types of complex analytic function germs of two complex variables $f \colon (\C^2, 0) \to (\C, 0)$, it was constructed in \cite{HenryP:2003} a numerical invariant that is
given in terms of the leading coefficients of the asymptotic expansions of $f$ along the
branches of generic polar curve of $f$. 

We recall the main result of \cite{HenryP:2003}. Let $f \colon (\C^2, 0) \to (\C, 0)$ be the germ of an analytic function with Taylor expansion:
\begin{equation}\label{eq1} 
 f(x, y) = H_k(x, y) + H_{k+1}(x, y) +\cdots
\end{equation}
where $H_k\not=0$ and each $H_j$ is a homogeneous polynomial that has degree $j$ or $H_j\equiv 0$. We shall assume that $H_k(1, 0) \not= 0$, in this case, we say that $f$ is {\bf mini-regular in $x$}. We also assume, for simplicity, that $f(x, y)$ has no
multiple roots.
By an analytic arc we mean a fractional power series of the form
\begin{equation}\label{eq2} 
\lambda : x=\lambda (y):= c_{1}y^{n_{1}/N}+c_{2}y^{n_{2}/N}+ \cdots, c_{i} \in \mathbb{C}
\end{equation}
where $N \leq n_1 < n_2 < \cdots $ are positive integers having no common divisor, such that
$\lambda (t^{N})$ has positive radius of convergence. We can identify $\lambda $ with the analytic arc $x=\lambda (y):=c_{1}t^{n_{1}}+c_{2}t^{n_{2}}+ \cdots,$ $\vert t \vert$ small, which is not tangent to the $x$-axis (since $n_1/N \geq 1$).
A polar arc $x=\gamma (y)$ is a branch of the polar curve $\Gamma:\partial f/\partial x =0.$ Since $f$ is
mini-regular in $x,$ $x=\gamma (y)$ is not tangent to the $x$-axis and it is an arc in our sense. Let
$\gamma$ be a polar arc. We associate to $\gamma$ two numbers: $h_0 = h_0(\gamma) \in \mathbb{Q_{+}}$ and $c_{0} = c_{0}(\gamma) \in \mathbb{C^{*}}$ given by the expansion
\begin{equation}\label{eq4} 
f(\gamma(y),y) = c_{0}y^{h_{0}} + \cdots, \quad c_{0} \neq 0.
\end{equation}
If $h_0(\gamma) > k$ then the polar arc has to be tangent to the singular locus of the tangent cone
$C_0(X)$, given by $Sing(C_{0}(X)) := \{\partial H_{k} / \partial x = \partial H_{k} / \partial y = 0 \}$, where $X=f^{-1}(0)$. We call such polar arcs as {\bf tangential polar arcs}.
Fix a line $l \subset Sing(C_{0}(X)).$ Let $\Gamma(l)$ denote the set of polar arcs tangent to $l$.
We associate to $l$ the set of formal expressions
$I(l) = \{c_{0}(\gamma)y^{h_{0}(\gamma)} | \gamma \in \Gamma(l) \} / \mathbb{C^{*}},$
where $c \in \mathbb{C^{*}}$ acts by multiplication on $y$:
$$
\{c_{01}y^{h_{01}},\cdots, c_{0k}y^{h_{0k}} \} \sim \{c_{01}c^{h_{01}}y^{h_{01}},\cdots,c_{0k}c^{h_{0k}}y^{h_{0k}} \}.
$$
By the invariant ${\rm Inv}(f)$ of $f$ we mean the set of all $I(l)$, where $l$ runs over all lines in
$Sing(C_{0}(X)).$ No that ${\rm Inv}(f)$ is well-defined and does not depend on the choice of local coordinates. The main result of \cite{HenryP:2003} is the following.

\begin{theorem}
Let $f_1, f_2\colon (\C^2, 0)\to (\C, 0)$ be two analytic function-germs. If $f_1$ and $f_2$ are Lipschitz equivalent, then ${\rm Inv}(f_1) = {\rm Inv}(f_2)$. 
\end{theorem}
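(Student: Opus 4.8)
The plan is to specialize the machinery of Section \ref{sec:preliminaries} and Section 3 to the bi-Lipschitz case $\alpha = 1$ and combine it with the local analysis of polar arcs from \cite{HenryP:2003}. Write $\tilde g = f_1$, $g = f_2$, and let $H\colon(\C^2,0)\to(\C^2,0)$ be a bi-Lipschitz homeomorphism with common Lipschitz constant $L\geq 1$ and $f_1 = f_2\circ H$, so that $H$ sends the levels of $f_1$ onto the levels of $f_2$. After a generic linear change of coordinates we may assume $f_1$ and $f_2$ are mini-regular in $x$ with no multiple roots, and by Corollary \ref{cor:inv_mult} they have the same order $k$ at $0$. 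When $\alpha = 1$ the three functions $\psi_{0,1}$, $\psi_{1,1}$, $\psi_{2,1}$ coincide; write $\psi := \psi_{0,1}$ and $Y(\rho,K,A)$, $Y(\delta,M,K,A)$ for the associated sets. Lemmas \ref{lemma0}--\ref{lema2.2} then collapse to clean inclusions: for $K$ fixed large, $H$ carries $Y(\delta,M,K,A)$ onto $\tilde Y(\delta,M',K,A')$ where $M'$ differs from $M$ by a fixed power of $L$ and $A'$ from $A$ by a fixed power of $L$ (and symmetrically for $H^{-1}$). Thus the family of metric ``bad sets'' is preserved by $H$ up to bounded multiplicative perturbation of its parameters.

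The geometric heart of the argument, which I would import (or reprove) from \cite{HenryP:2003}, is the identification of these bad sets with neighborhoods of the tangential polar arcs. Concretely, for $K$ fixed large and $A$ large, the connected components of $Y(\delta,M,K,A)$ near $0$ are horn-shaped neighborhoods of the tangential polar arcs $\gamma$ of $f_1$ (the branches of $\partial f_1/\partial x = 0$ with $h_0(\gamma) > k$), one horn per arc; the opening profile of the horn around $\gamma$ is governed by the exponent $h_0(\gamma)$, its width is $o(|y|)$, and on that horn one has the uniform asymptotics $f_1 \equiv c_0(\gamma)\,y^{h_0(\gamma)}(1 + o(1))$. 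I would state this structure result, and its analogue for $f_2$. Since the horns around distinct arcs of $f_2$ are eventually pairwise disjoint and each horn of $f_1$ is connected, $H$ sends the horn of each tangential polar arc $\gamma$ of $f_1$ into the horn of a unique tangential polar arc $\gamma'$ of $f_2$; applying the same argument to $H^{-1}$ shows $\gamma\mapsto\gamma'$ is a bijection between the sets of tangential polar arcs.

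It remains to show this bijection respects the partition into the families $\Gamma(l)$, $l\subset\Sing(C_0(X))$, and the invariants $I(l)$. Two tangential polar arcs lie in the same $\Gamma(l)$ exactly when, at height $y$, their mutual distance is $o(|y|)$ rather than $\asymp|y|$; since a bi-Lipschitz map distorts both mutual distances and the ``height'' $|y|\asymp|H(\cdot)|$ by at most the factor $L$, and the horns themselves have width $o(|y|)$, this dichotomy is preserved, so the bijection descends to a bijection $l\leftrightarrow l'$ of lines. Fix such a pair and $\gamma\in\Gamma(l)$ with image arc $\gamma'\in\Gamma(l')$, and parametrize $H(\gamma(y),y) = (\xi(y),\eta(y))$. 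Being bi-Lipschitz forces $\eta(y) = \lambda y\,(1+o(1))$ for some $\lambda\in\C^*$, and since $H$ preserves the \emph{values} of the function exactly ($f_1 = f_2\circ H$, with no rescaling of $f$),
\[
c_0(\gamma)\,y^{h_0(\gamma)}(1+o(1)) = f_1(\gamma(y),y) = f_2(\xi(y),\eta(y)) = c_0(\gamma')\,\eta(y)^{h_0(\gamma')}(1+o(1)),
\]
where the last equality uses the uniform asymptotics of $f_2$ on the horn of $\gamma'$ together with its $o(|y|)$ width. Comparing exponents forces $h_0(\gamma) = h_0(\gamma')$ (exponents are rigid under the bounded multiplicative perturbations introduced by $H$), and then comparing coefficients gives $c_0(\gamma) = \lambda^{h_0(\gamma')}c_0(\gamma')$. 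As $\lambda$ depends only on $l$, this says exactly that $\{c_0(\gamma)y^{h_0(\gamma)} : \gamma\in\Gamma(l)\}$ and $\{c_0(\gamma')y^{h_0(\gamma')} : \gamma'\in\Gamma(l')\}$ agree modulo the $\C^*$-action, i.e. $I(l) = I(l')$. Taking the union over all lines yields $\Inv(f_1) = \Inv(f_2)$, where I write $\Inv$ for the Henry--Parusi\'nski invariant.

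The main obstacle is the structural input flagged in the second paragraph: establishing that for large threshold $A$ the metric sets $Y(\delta,M,K,A)$ are precisely the horn neighborhoods of the tangential polar arcs, with opening rate controlled by $h_0(\gamma)$, horn width $o(|y|)$, and $f$ restricted to such a horn equal to its leading term uniformly. This is the technical core of \cite{HenryP:2003} and rests on the local Puiseux/Newton-polygon analysis of the Milnor fibre geometry near a tangential polar arc — in particular on estimating the inner (diameter) distance along a level curve that ``turns back'' tightly near the arc, which is exactly what the quantities $\varphi_{i,\alpha}$, $\psi_{i,\alpha}$ were designed to capture. The coefficient-matching step is the other delicate point, since it uses that the horns are thin enough that $f_2$ does not vary to leading order across them.
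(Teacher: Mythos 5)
Your outline is essentially the argument of Henry--Parusi\'nski, which is exactly what the paper follows: the paper does not reprove this Lipschitz statement (it cites it as the main result of \cite{HenryP:2003}), and its proof of the H\"older generalization, Theorem \ref{main theorem}, is your argument with the sets $Y_{i,\alpha}$, Lemma \ref{lema2.2}, Lemma \ref{lemma1} and Proposition \ref{proposition main} playing precisely the roles you assign them, so specializing to $\alpha=1$ recovers your proof. The only cosmetic difference is that the paper works with a correspondence of tangent lines $l_1\mapsto l_2$ and a two-sided inclusion of the sets $I(l)$ rather than an arc-by-arc bijection, which sidesteps having to show that distinct arcs land in distinct horns; this does not change the substance.
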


Let us introduce the main goal of this paper, namely, we state the H\"older invariance of the Henry-Parusinski number ${\rm Inv}(f)$ for function-germs $f\colon(\C^2,0)\to (\C^2,0)$.

\begin{theorem}\label{main theorem} 
Let $f_1,f_2\colon(\C^2,0)\to (\C,0)$ be two germs of analytic functions. If $f_1$ and $f_2$  are H\"older equivalent, then ${\rm Inv}(f_1)={\rm Inv}(f_2)$.
\end{theorem}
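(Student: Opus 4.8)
The plan is to follow the architecture of Henry--Parusi\'nski \cite{HenryP:2003}, replacing every occurrence of a bi-Lipschitz hypothesis with the weaker bi-$\alpha$-H\"older one, and to exploit the fact that in the definition of H\"older equivalence we are free to choose $\alpha$ as close to $1$ as we wish. Concretely, suppose $f_1\sim_H f_2$; then for every $\alpha\in(0,1)$ there is a bi-$\alpha$-H\"older germ $H=H_\alpha\colon(\C^2,0)\to(\C^2,0)$ with $f_1=f_2\circ H$, hence $H$ carries the levels of $f_1$ to the levels of $f_2$. By Corollary \ref{cor:inv_mult} the multiplicities agree, $\operatorname{ord}_0 f_1=\operatorname{ord}_0 f_2=k$, and after an analytic change of coordinates we may assume both $f_1$ and $f_2$ are mini-regular in $x$ with no multiple roots, so that the invariant $\operatorname{Inv}(f_i)$ is defined via the tangential polar arcs and the data $(h_0(\gamma),c_0(\gamma))$ as in \eqref{eq4}. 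First I would reconstruct, from the sets $Y_{i,\alpha}(\delta,M,K,A)$ of ``high curvature'' points introduced in the excerpt, the tangential polar arcs of $f_i$ together with their contact orders: the point is that along a tangential polar arc the diameter distance inside a small ball on a level set blows up at a rate governed by $h_0(\gamma)$ and $k$, and this rate is detected by the exponents appearing in $\psi_{i,\alpha}$; Lemmas \ref{lemma0}, \ref{lemma-semcite} and \ref{lema2.2} then show these sets are preserved, up to the explicit $L$- and $\alpha$-dependent reparametrizations of $(\rho,K,A)$, by $H$.

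Next I would run the asymptotic-expansion comparison. Fixing a tangential polar arc $\gamma$ of $f_1$ lying over a line $l\subset \operatorname{Sing}(C_0(X_1))$, one tracks how $H$ maps a small disk transverse to the zero level near $\gamma(y)$ onto the corresponding configuration for $f_2$; the leading coefficient $c_0(\gamma)$ and exponent $h_0(\gamma)$ are read off from the size of the ``polar box'' $B(\gamma(y),y)$ on which $f_1$ has a fixed small value, compared with the base point $y$. Because $H$ and $H^{-1}$ are $\alpha$-H\"older with the same constant $L$, the image box sits between balls of radii $L^{-1/\alpha}(\cdot)^{1/\alpha}$ and $L(\cdot)^{\alpha}$, exactly as in display \eqref{eq2.7}; this gives two-sided bounds on $\tilde h_0$ and $\tilde c_0$ in terms of $h_0$, $c_0$, $L$ and $\alpha$. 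The crucial move is now to let $\alpha\to 1^-$: the exponents $h_0(\gamma)$ are rational numbers drawn from a finite set (bounded denominators, bounded by the Milnor number), so a bound of the form $\alpha^{2}h_0\le \tilde h_0\le \alpha^{-2}h_0$ valid for all $\alpha<1$ forces $\tilde h_0=h_0$; and once the exponents match exactly, the multiplicative distortion of the coefficients, of the form $L^{-c(\alpha)}\le |\tilde c_0/c_0|\le L^{c(\alpha)}$ with $c(\alpha)\to 0$, combined with the fact that $c_0$ is only well-defined modulo the $\C^*$-action $y\mapsto cy$ used to define $I(l)$, pins down $\tilde c_0$ up to exactly that action. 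Assembling these identifications over all lines $l$ yields $\operatorname{Inv}(f_1)=\operatorname{Inv}(f_2)$.

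The main obstacle I expect is the first step: making precise, and uniform in $\alpha$, the claim that the high-curvature sets $Y_{i,\alpha}$ genuinely detect the tangential polar arcs and nothing else. In the Lipschitz setting of \cite{HenryP:2003} this rests on delicate estimates (of the type ``the curvature of the fibers concentrates along the polar curve''), and here every such estimate acquires $\alpha$-dependent exponents; one must check that the error terms remain negligible as $\alpha\to1$ and that no spurious arcs enter for $\alpha$ bounded away from $1$. A secondary technical point is bookkeeping: since bi-$\alpha$-H\"older conjugation is not transitive, one cannot compose the $H_\alpha$ for different $\alpha$, so the whole argument must be carried out for a single but arbitrary $\alpha$ and only the \emph{conclusions} (equalities of discrete data) passed to the limit. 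I would organize the write-up so that all $\alpha$-dependent constants are tracked explicitly through Lemmas \ref{lemma0}--\ref{lema2.2}, then invoke the finiteness of the possible contact exponents to collapse the inequalities to equalities, and finally quotient by the $\C^*$-action to recover $\operatorname{Inv}$.
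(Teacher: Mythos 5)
Your overall strategy coincides with the paper's: follow Henry--Parusi\'nski, show that the sets $Y_{i,\alpha}(\delta,M,K,A)$ are preserved by the bi-$\alpha$-H\"older conjugation (Lemma \ref{lema2.2}) and detect exactly the tangential polar arcs (Proposition \ref{proposition main}), then compare the asymptotic expansions of $f_1$ and $f_2$ along corresponding arcs, using the freedom to take $\alpha$ close to $1$ to force the contact exponents to agree. In particular, your idea of ruling out $\tilde h_0\neq h_0$ by choosing $\alpha>\min\{h_0/\tilde h_0,\tilde h_0/h_0\}$ is exactly the argument in the paper, and your flagged ``main obstacle'' is indeed where the technical work lives (Lemmas \ref{lemma3}--\ref{lemma5}).

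The genuine gap is in the coefficient-matching step. You propose to conclude from a modulus bound $L^{-c(\alpha)}\le|\tilde c_0/c_0|\le L^{c(\alpha)}$ with $c(\alpha)\to0$ that $\tilde c_0$ is pinned down up to the $\C^*$-action. This cannot work as stated, for two reasons. First, $I(l)$ is a set of \emph{complex} leading terms modulo a \emph{simultaneous} rescaling $c_{0j}\mapsto c_{0j}c^{h_{0j}}$; controlling only the moduli of the individual ratios loses the arguments of these complex numbers (for instance $\{y^3,2y^3\}$ and $\{y^3,-2y^3\}$ have identical modulus data but are distinct in the quotient). Second, even exact knowledge of each ratio $\tilde c_0(\tilde\gamma)/c_0(\gamma)$ separately does not produce a single $c\in\C^*$ working for all arcs over the same line, which is what the equivalence relation defining $I(l)$ requires. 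The paper's mechanism is different and is what you need: the exact identity $f_1=f_2\circ H$ evaluated along a polar arc, together with Inequality \eqref{Eq:final}, gives $(y_2/y_1)^{h_0}\to\lambda\in\C^*$, and the mutual tangency of the arcs over $l_1$ combined with the H\"older continuity of $H$ gives $y_2-\tilde y_2=o(y_2)$ for the images of two different arcs, so the \emph{same} limit $\lambda$ serves for all of them and plays the role of the $\C^*$-constant. A secondary omission: you never treat the case ${\rm Inv}(f_1)=\emptyset$, where your arc-by-arc argument is vacuous; the paper disposes of it by a separate route, passing to the zero sets, upgrading the H\"older equivalence to a bi-Lipschitz homeomorphism of the curves via \cite{FernandesSS:2018}, and comparing tangent cones via \cite{Sampaio:2016}.
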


Before we show Theorem \ref{main theorem} that deals with the Henry-Parunsinski Invariant, we observe two important results about the sets $Y_{\delta,i,\alpha}:=Y_{i,\alpha}(\delta ,M,K,A)$,  $i=0,1,2$. Firstly, we show that they are preserved by bi-$\alpha$-H\"older homeomorphism, see Lemma \ref{lema2.2}. Secondly, we show that the asymptotic behavior of a function $f$ on $Y_{\delta,i,\alpha}$ ($i=0,1,2$) gives exactly the invariant ${\rm Inv}(f)$ thanks to the following proposition, which will be proved in the next section.

\begin{proposition}\label{proposition main} 
Let $\tilde{\Gamma}$ be the union of tangential polar arcs. Suppose $\delta > 0$ and sufficiently small. Then \\
$ a) \ \tilde{\Gamma} \subset Y_{\delta,i,\alpha}$ $\rm ($$i=0,1,2$$\rm )$.\\ 
$ b)$ There is a constant $B > 0,$ which depends on the constants in the definition of $Y_{\delta,i,\alpha}$  $\rm ($$i=0,1,2$$\rm )$ such that for each $p_0 \in Y_{\delta,,i,\alpha}$ $\rm ($$i=0,1,2$$\rm )$ there is $p \in \tilde{\Gamma}$ such that $f(p) = f(p_0)$ and
\begin{equation}
| p - p_{0} | \leq B {| p_{0} |}^{1+ \delta}
\end{equation}
\end{proposition}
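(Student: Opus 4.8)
The plan is to exploit the defining property of the sets $Y_{\delta,i,\alpha}$ — that they single out points where the levels $g^{-1}(c)$ bend sharply, i.e.\ where the diameter distance is much larger than the Euclidean distance at the appropriate scale — and to identify this bending geometrically with proximity to a tangential polar arc. Recall that a polar arc $\gamma$ is tangential exactly when $h_0(\gamma) > k$, which forces $\gamma$ to be tangent to $\mathrm{Sing}(C_0(X))$; along such an arc $f$ decays faster than the generic order $k$, so the fibers of $f$ near $\tilde\Gamma$ are ``pinched.'' For part $a)$, I would fix a tangential polar arc $\gamma$ and a point $p_0$ on it, take $c = f(p_0)$, and estimate from below the ratio $(d_{p_0,\rho,K,diam}(p,q))^{1/\alpha^{2-i}} / |p - q|^{\alpha^{2-i}}$ for a suitable pair $p, q$ on the fiber $X(p_0,\rho)$ lying in the component of $p_0$ inside $X(p_0, K\rho^{\alpha^2})$. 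The point is that on the fiber near a tangential polar arc the two branches of the level set that coalesce along $\gamma$ are joined by a path whose diameter is of the order of the ``gap'' — governed by $h_0(\gamma)$ — while their Euclidean distance is of strictly smaller order (governed by $k$); choosing the scale $\rho$ as in the definition $\rho = M^{1/\alpha^{8-2i}}|p_0|^{(1+\delta)/\alpha^{8-2i}}$, the exponent bookkeeping gives $\psi_{i,\alpha}(p_0,\rho,K) \to \infty$ as $p_0 \to 0$ along $\gamma$, hence $\psi_{i,\alpha}(p_0,\rho,K) \geq A$ for $p_0$ close enough to the origin, which is what we need since we work with germs.

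For part $b)$, I would argue by contraposition on the geometry: if $p_0 \in Y_{\delta,i,\alpha}$, then by definition there is some $\rho' \leq \rho = M^{1/\alpha^{8-2i}}|p_0|^{(1+\delta)/\alpha^{8-2i}}$ and a pair of points $p,q$ in the relevant component of $X(p_0, K(\rho')^{\alpha^2})$ with $(d_{p_0,\rho',K,diam}(p,q))^{1/\alpha^{2-i}} \geq A|p - q|^{\alpha^{2-i}}$. This says the fiber through $p_0$ is strongly non-LNE at scale $\rho'$, and by the structure theory of plane curve singularities (the fibers $f^{-1}(c)$ are complex analytic curves whose bi-Lipschitz geometry is controlled by their contact with the polar curve, cf.\ \cite{HenryP:2003}) such pinching can only occur in a neighborhood of a tangential polar arc. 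Concretely, I would show that there is a tangential polar arc $\gamma$ and a point $p \in \gamma$ with $f(p) = f(p_0)$ whose distance to $p_0$ is controlled by the scale at which the pinching is detected, namely $|p - p_0| \lesssim (\rho')^{\text{something}} \lesssim |p_0|^{1+\delta}$, with the constant $B$ absorbing all the powers of $M, K, A$. This is essentially a quantitative form of the statement that the non-LNE locus of the fibration $\{f = c\}$ is the union of the tangential polar arcs.

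The main obstacle is part $b)$: turning ``large $\psi_{i,\alpha}$'' into ``close to a tangential polar arc'' with the \emph{explicit} bound $|p - p_0| \leq B|p_0|^{1+\delta}$ requires a careful analysis of the Puiseux expansions of the branches of $f^{-1}(f(p_0))$ near $p_0$ and of how the contact orders between these branches relate to the contact order with the polar curve $\Gamma$; one must rule out that a large diameter-to-Euclidean ratio could be produced by some other feature of the fiber (e.g.\ a node or a tangency between two \emph{non}-tangential branches) at the scale $\rho'$, and verify that the scale $\rho'$ at which pinching appears is forced to be at least of order $|p_0|^{h_0(\gamma)/k}$ or so, which after the exponent juggling built into the definition of $Y_{\delta,i,\alpha}$ yields the displayed estimate. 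The choice of the exponents $\alpha^{8-2i}$ in the definition is precisely engineered to make this work uniformly in $i = 0,1,2$, so the bookkeeping, though tedious, is forced; I expect the argument to follow the corresponding Lipschitz computation in \cite{HenryP:2003} with $\alpha$-powers inserted, and the real content to be checking that no estimate degenerates as $\alpha \to 1^-$.
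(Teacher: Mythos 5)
Your sketch of part $a)$ is essentially the paper's Lemma \ref{lemma3}: fix $p_0$ on a tangential polar arc, use the weighted-homogeneous approximation $F=Q+\cdots$ and the associated one-variable polynomial $R$ to confine the relevant connected component of the fiber to the horn $\{|x-\gamma(y)|\le \varepsilon'|y|^{\xi}\}$, pick two distinct points $p_1,p_2$ of the branched covering $\pi_c'$ lying over the same $y_1$, and note that their Euclidean distance is $O(|y_0|^{\xi})$ while any path joining them inside the component has diameter at least $\tfrac12 N|y_0|^{(1+\delta)/\alpha^{8-2i}}$; the conditions $\alpha^{12}>\sqrt{1/\xi}$ and $\delta<\sqrt{\xi}-1$ then make the test ratio blow up. So part $a)$ is correct in outline, modulo actually writing out that exponent bookkeeping. (Minor quibble: it is the Euclidean gap between the coalescing branches that is of the \emph{higher} order $|y|^{\xi}$, hence small, while the diameter of any connecting path is comparable to the radius of the ball; your phrasing inverts this, but the intended mechanism is the right one.)

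Part $b)$ is where your proposal has a genuine gap. You propose to prove directly that a large value of $\psi_{i,\alpha}$ forces proximity to a tangential polar arc by redoing the Puiseux/contact analysis of \cite{HenryP:2003} with $\alpha$-powers inserted, and you yourself flag this as the main obstacle; as written, the crucial implication (``such pinching can only occur near a tangential polar arc, at distance $\le B|p_0|^{1+\delta}$'') is asserted, not proved. The paper avoids this analysis entirely by an elementary comparison of distances: the diameter distance is bounded above by the length (intrinsic) distance used by Henry--Parusi\'nski, and for small quantities $(d_{diam})^{1/\alpha^{2-i}}\le d_{diam}$ while $|p-q|^{\alpha^{2-i}}\ge |p-q|$, so the ratio defining $\psi_{i,\alpha}$ is dominated by the Henry--Parusi\'nski ratio $d_{p_0,\rho,K}(p,q)/|p-q|$; combined with the monotonicity of $\psi$ in $\rho$ and the fact that the radii $M^{1/\alpha^{8-2i}}|p|^{(1+\delta)/\alpha^{8-2i}}$ are smaller than $M|p|^{1+\delta}$ near the origin, this yields $Y_{\delta,i,\alpha}\subset Y_{\delta}$ (Lemmas \ref{lemma4} and \ref{lemma5}). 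Part $b)$ is then literally Proposition 4.5 of \cite{HenryP:2003} applied to $Y_{\delta}$, with no new study of the fibers required. To salvage your route you would have to carry out the full H\"older analogue of Section 4 of \cite{HenryP:2003}; the one-line reduction $d_{X,diam}\le d_{X}$ is the idea your proposal is missing.
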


\

\begin{proof}[ Proof of Theorem \ref{main theorem}]

Let $X$ (resp. $Y$) be the zeros of $f_1$ (resp. $f_2$).

We first prove the following:

\begin{claim}\label{claim:inv_reduced}
 ${\rm Inv}(f_1)=\emptyset$ if and only if $C_0(X)$ is the union of $k$ different lines, where $k={\rm ord}_0 (f_1)$.
\end{claim}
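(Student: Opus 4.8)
The plan is to reduce the condition ${\rm Inv}(f_1)=\emptyset$ to the purely algebraic statement that the leading form $H_k$ of $f_1$ (here $k={\rm ord}_0(f_1)$) is reduced, and then to observe that $H_k$ being reduced is exactly the condition that $C_0(X)=\{H_k=0\}$ be a union of $k$ pairwise distinct lines. So the whole argument splits into an easy dictionary between $H_k$ and ${\rm Sing}(C_0(X))$, plus one genuinely geometric input about the polar curve.

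First I would exploit mini-regularity: since $H_k(1,0)\neq 0$, we may write $H_k(x,y)=y^k\,h(x/y)$ with $h(t):=H_k(t,1)$ of degree exactly $k$, so that $C_0(X)$ is a union of $k$ distinct lines if and only if $h$ has no multiple root. Next I would combine the description ${\rm Sing}(C_0(X))=\{\partial H_k/\partial x=\partial H_k/\partial y=0\}$ with Euler's identity $kH_k=x\,\partial H_k/\partial x+y\,\partial H_k/\partial y$ to see that a point $[a:b]\neq[0:0]$ lies in ${\rm Sing}(C_0(X))$ if and only if $t_0=a/b$ is a root of $h$ of multiplicity at least $2$ (the case $b=0$ being impossible because $H_k(1,0)\neq 0$). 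Hence $h$ has no multiple root $\iff {\rm Sing}(C_0(X))=\{0\}\iff$ no line is contained in ${\rm Sing}(C_0(X))$. The direction ``$\Leftarrow$'' of the Claim is then immediate: if $C_0(X)$ is a union of $k$ distinct lines, then ${\rm Sing}(C_0(X))=\{0\}$ contains no line, and since ${\rm Inv}(f_1)$ is by definition the collection of the sets $I(l)$ indexed by the lines $l\subset{\rm Sing}(C_0(X))$, we get ${\rm Inv}(f_1)=\emptyset$.

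For ``$\Rightarrow$'' I would argue by contraposition: assume $C_0(X)$ is not a union of $k$ distinct lines and fix a line $l_0\subset{\rm Sing}(C_0(X))$, corresponding (by the previous step) to a root $t_0$ of $h$ with ${\rm mult}_{t_0}(h)\geq 2$; it remains to exhibit a tangential polar arc tangent to $l_0$, so that $I(l_0)\neq\emptyset$. Mini-regularity gives $\partial H_k/\partial x=y^{k-1}h'(x/y)\not\equiv 0$, and this is precisely the lowest-degree form of $\partial f_1/\partial x$; hence the tangent cone of the polar curve $\Gamma=\{\partial f_1/\partial x=0\}$ equals $\{h'(x/y)=0\}$, which contains $l_0$ because $h'(t_0)=0$. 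Therefore $\Gamma$ has at least one branch $\gamma$ with tangent line $l_0$. A short computation of orders of vanishing in $f_1(\gamma(y),y)=H_k(\gamma(y),y)+H_{k+1}(\gamma(y),y)+\cdots$ --- using that $\gamma(y)/y\to t_0$ and that $t_0$ is at least a double root of $h$, so the degree-$k$ term already vanishes to order strictly bigger than $k$ in $y$ while every higher term vanishes to order at least $k+1$ --- shows $h_0(\gamma)={\rm ord}_y f_1(\gamma(y),y)>k$. Thus $\gamma$ is a tangential polar arc lying in $\Gamma(l_0)$, so $I(l_0)$ is nonempty and ${\rm Inv}(f_1)\neq\emptyset$.

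I expect the only non-formal step to be this last one: certifying that the singular line $l_0$ of the tangent cone is actually realized by some branch of the polar curve and that this branch is honestly tangential (i.e. $h_0>k$). That is where the geometry of the polar curve enters, as opposed to the mere algebra of $H_k$; everything else is bookkeeping with Euler's relation and valuations. One should also keep in mind the small points that $l_0$ can never be the $x$-axis and that $f_1$ does not vanish identically along $\gamma$ (so that $h_0(\gamma)$ is well defined), both of which follow from mini-regularity together with the standing assumption that $f_1$ has no multiple roots.
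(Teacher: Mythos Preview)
Your argument is correct and follows the same chain of equivalences as the paper's proof: ${\rm Inv}(f_1)=\emptyset \iff$ there is no line $l\subset{\rm Sing}(C_0(X)) \iff H_k$ is reduced $\iff C_0(X)$ is the union of $k$ distinct lines. Your additional step of producing a tangential polar arc tangent to $l_0$ (so as to certify $I(l_0)\neq\emptyset$) is correct but not needed for this claim, since by definition ${\rm Inv}(f_1)$ is the collection $\{I(l):l\subset{\rm Sing}(C_0(X))\}$ and is therefore empty precisely when there is no such line $l$.
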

\begin{proof}[Proof of Claim \ref{claim:inv_reduced}]
 It follows from the definition of 
 ${\rm Inv}(f_1)$ that ${\rm Inv}(f_1)=\emptyset$ if and only if there is no line $l \subset Sing(C_{0}(X)).$ However, there is no line $l \subset Sing(C_{0}(X))$ if and only if the initial part of $f_1$, $H_k$, is a reduced polynomial. 
Since $C_0(X)$ is the zeros set of $H_k$, we obtain that $H_k$ is a reduced polynomial if and only if $C_0(X)$ is the union of $k$ different lines.

Therefore, ${\rm Inv}(f_1)=\emptyset$ if and only if $C_0(X)$ is the union of $k$ different lines.
\end{proof}

Assume that $f_1$ and $f_2$ are H\"older equivalent. From Corollary \ref{cor:inv_mult}, ${\rm ord}_0(f_1)={\rm ord}_0(f_2)=k$.

\begin{claim}\label{claim:inv_empty_equiv}
 ${\rm Inv}(f_1)=\emptyset$ if and only if ${\rm Inv}(f_2)=\emptyset$.
\end{claim}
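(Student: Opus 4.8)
The plan is to exploit Claim~\ref{claim:inv_reduced} together with the H\"older invariance of the tangent cone-type data encoded in the sets $Y_{\delta,i,\alpha}$. By Claim~\ref{claim:inv_reduced}, $\mathrm{Inv}(f_1)=\emptyset$ exactly when $C_0(X)$ is a union of $k=\mathrm{ord}_0(f_1)$ distinct lines, i.e. $H_k$ is reduced, and likewise $\mathrm{Inv}(f_2)=\emptyset$ exactly when $C_0(Y)$ is a union of $k$ distinct lines. So it suffices to show: if $f_1\sim_H f_2$, then $H_k^{(1)}$ is reduced iff $H_k^{(2)}$ is reduced. Equivalently, using the contrapositive, it suffices to show that ``$H_k$ is \emph{not} reduced'' is preserved under H\"older equivalence.

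The key observation is that ``$H_k$ is not reduced'' is equivalent to $\mathrm{Sing}(C_0(X))\neq\emptyset$, which by the discussion preceding Proposition~\ref{proposition main} is equivalent to the existence of tangential polar arcs, i.e. $\tilde\Gamma\neq\emptyset$. By Proposition~\ref{proposition main}(a), $\tilde\Gamma\neq\emptyset$ forces $Y_{\delta,i,\alpha}\neq\emptyset$ for $\delta>0$ small and all $i=0,1,2$. Conversely, by Proposition~\ref{proposition main}(b), if $Y_{\delta,i,\alpha}\neq\emptyset$ then (taking any $p_0$ in it) there is $p\in\tilde\Gamma$ with $f(p)=f(p_0)$, hence $\tilde\Gamma\neq\emptyset$. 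Therefore for $f_1$ (and separately for $f_2$), $H_k$ is not reduced $\iff$ $\tilde\Gamma\neq\emptyset$ $\iff$ $Y_{\delta,i,\alpha}\neq\emptyset$ for small $\delta$. Now invoke Lemma~\ref{lema2.2}: for $K$ chosen large enough relative to the common H\"older constant $L$ of $H$ and $H^{-1}$, the homeomorphism $H$ sends the $Y$-sets of $f_1$ into $Y$-sets of $f_2$ (with shifted parameters $\rho,M,A$) and vice versa. Since $H$ is a bijection, nonemptiness of the relevant $Y$-set for $f_1$ at some scale implies nonemptiness of a corresponding $Y$-set for $f_2$ at a comparable scale (and symmetrically); because the parameters $\rho,M,A$ range over all admissible values when we check nonemptiness for arbitrarily small $\delta$, this transfers ``$\tilde\Gamma_1\neq\emptyset$'' to ``$\tilde\Gamma_2\neq\emptyset$''. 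Hence $H_k^{(1)}$ reduced $\iff$ $H_k^{(2)}$ reduced, so $\mathrm{Inv}(f_1)=\emptyset\iff\mathrm{Inv}(f_2)=\emptyset$.

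Concretely, I would write: Suppose $\mathrm{Inv}(f_1)\neq\emptyset$; by Claim~\ref{claim:inv_reduced} the initial form $H_k$ of $f_1$ is not reduced, so $\mathrm{Sing}(C_0(X))\neq\emptyset$ and there exists at least one tangential polar arc; thus $\tilde\Gamma\neq\emptyset$, and by Proposition~\ref{proposition main}(a) there are points $p_0\in Y_{\delta,1,\alpha}(\delta,M,K,A)$ for suitable $\delta,M,A$ (indeed every point of $\tilde\Gamma$ works). Fix $K\geq L^{1+1/\alpha+2(1+\delta)/\alpha^5}$ as required by Lemma~\ref{lema2.2}. Then $H(p_0)\in\tilde Y_{2,\alpha}(\delta,L^{1+(1+\delta)/\alpha^5}M^{1/\alpha^4},K,AL^{-1-1/\alpha})$, so this set is nonempty; by Proposition~\ref{proposition main}(b) applied to $f_2$, there is a point of $\tilde\Gamma$ for $f_2$, whence $\mathrm{Sing}(C_0(Y))\neq\emptyset$ and, again by Claim~\ref{claim:inv_reduced}, $\mathrm{Inv}(f_2)\neq\emptyset$. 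By symmetry (using $H^{-1}$, which is also $\alpha$-H\"older with constant $L$), $\mathrm{Inv}(f_2)\neq\emptyset$ implies $\mathrm{Inv}(f_1)\neq\emptyset$. This proves Claim~\ref{claim:inv_empty_equiv}.

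The main obstacle I anticipate is bookkeeping the parameter changes: Proposition~\ref{proposition main}(b) guarantees a point of $\tilde\Gamma$ only when $Y_{\delta,i,\alpha}$ is built with parameters in the admissible range where (b) holds, so I must make sure the shifted parameters $(L^{1+(1+\delta)/\alpha^5}M^{1/\alpha^4},K,AL^{-1-1/\alpha})$ coming out of Lemma~\ref{lema2.2} still lie in that range (for $\delta$ small this is a matter of choosing $M$ large and $A$ in a suitable window, which one can always do since we only need \emph{one} witness point). A secondary subtlety is that $H$ need not be a germ of an $\alpha$-H\"older map for a fixed $\alpha$ but, under H\"older equivalence, such an $H$ exists for \emph{every} $\alpha\in(0,1)$; it is enough to pick one $\alpha$ close to $1$ and apply the above. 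Neither point is deep, so the proof of Claim~\ref{claim:inv_empty_equiv} is essentially a matter of carefully chaining the already-established lemmas.
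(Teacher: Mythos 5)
Your argument reaches the right conclusion but by a genuinely different route from the paper's. The paper's own proof is short and external: from H\"older equivalence of $f_1,f_2$ it extracts, for every $\alpha$, a bi-$\alpha$-H\"older homeomorphism $(X,0)\to(Y,0)$ between the zero sets, invokes \cite[Corollary 4.9]{FernandesSS:2018} to upgrade this to a bi-Lipschitz homeomorphism of the curves, then \cite[Theorem 3.2]{Sampaio:2016} to get a bi-Lipschitz homeomorphism $C_0(X)\to C_0(Y)$ of the tangent cones, so that $C_0(Y)$ is again a union of $k$ distinct lines and Claim~\ref{claim:inv_reduced} closes the argument. You instead stay entirely inside the paper's own machinery, characterizing ${\rm Inv}(f)\neq\emptyset$ by the nonemptiness of $\tilde\Gamma$, hence of the sets $Y_{\delta,i,\alpha}$ (Proposition~\ref{proposition main}), and transporting nonemptiness through $H$ via Lemma~\ref{lema2.2}. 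What your approach buys is self-containedness (no appeal to the two external theorems); what the paper's approach buys is brevity, no parameter bookkeeping, and in fact the stronger conclusion that the whole tangent cone type is preserved.

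One step of yours is asserted but not actually available from the text: you claim that the existence of a line $l\subset\mathrm{Sing}(C_0(X))$ is \emph{equivalent} to $\tilde\Gamma\neq\emptyset$, citing the discussion before Proposition~\ref{proposition main}. That discussion only gives one direction (a tangential polar arc must be tangent to $\mathrm{Sing}(C_0(X))$). For your chain ${\rm Inv}(f_1)\neq\emptyset\Rightarrow\tilde\Gamma_1\neq\emptyset$ you need the converse: that every multiple line $\ell$ of $H_k$ is the tangent of some tangential polar arc. This is true (if $\ell^m\,\|\,H_k$ with $m\geq 2$ and $H_k(1,0)\neq0$, then $\ell^{m-1}$ divides the initial form $\partial H_k/\partial x$ of $\partial f/\partial x$, so the polar curve has a branch tangent to $\ell$, and along such a branch $H_k$ vanishes to order $>k$, forcing $h_0>k$), but it must be proved, and your write-up treats it as already established. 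Also, as written, ``$\mathrm{Sing}(C_0(X))\neq\emptyset$'' should be ``$\mathrm{Sing}(C_0(X))$ contains a line'': for $k\geq2$ the origin always lies in $\{\partial H_k/\partial x=\partial H_k/\partial y=0\}$, so nonemptiness alone detects nothing. With these two repairs your proof is sound.
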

\begin{proof}[Proof of Claim \ref{claim:inv_empty_equiv}]
Assume that ${\rm Inv}(f_1)=\emptyset$. By Claim \ref{claim:inv_reduced}, $C_0(X)$ is the union of $k$ different lines.

In order to show that ${\rm Inv}(f_2)=\emptyset$, by Claim \ref{claim:inv_reduced}, it is enough to show that $C_0(Y)$ is the union of $k$ different lines.

Since $f_1$ and $f_2$ are H\"older equivalent, then for any $\alpha\in (0,1)$ there is a bi-$\alpha$-H\"older homeomorphism $\varphi_{\alpha}\colon (\C^2,0)\to (\C^2,0)$ such that $\varphi_{\alpha}(X)=Y$. By \cite[Corollary 4.9]{FernandesSS:2018}, there is a bi-Lipschitz homeomorphism $\phi\colon (X,0)\to (Y,0)$. By \cite[Theorem 3.2]{Sampaio:2016}, there is a bi-Lipschitz homeomorphism $\psi\colon (C_0(X),0)\to (C_0(Y),0)$. Therefore, $C_0(Y)$ is the union of $k$ different lines as well. By Claim \ref{claim:inv_reduced}, ${\rm Inv}(f_2)=\emptyset$.

\end{proof}

Finally, let us consider the case where ${\rm Inv}(f_1)$ and ${\rm Inv}(f_2)$ are not empty sets; we are going to show that  ${\rm Inv}(f_1)= {\rm Inv}(f_2)$. First, we explain how to recover ${\rm Inv}(f_1)$ from the asymptotic behavior of $f_1$ on $Y_{\delta}.$ By Proposition \ref{proposition main}, the tangent cone to $Y_{\delta}$ at the origin coincides with $Sing(C_{0}(X))$, and then, as germ at the origin, $Y_{\delta}$ is included in a ``horn" neighborhood of $Sing(C_{0}(X))$, that is,
$$Y_{\delta}\subset\{ p \in \mathbb{C}^{2}; dist(p,Sing(C_{0}(X))) \leq {| p |}^{1+\xi}\}$$ for some $\xi > 0,$ where $dist(p,A)=\inf \{|p-a|;a\in A\}$. Indeed, clearly the union $\tilde{\Gamma}$ of tangential polar arcs is included in such a ``horn" neighborhood for an exponent $\xi.$ By taking $\xi$ even smaller (if necessary), in particular smaller than $\delta$, we may assure, by Proposition \ref{proposition main}, that the ``horn" neighborhood contains $Y_{\delta}.$ Given $l \subset Sing(C_{0}(X))$, denote by $Y_{\delta}(l)$ the part of $Y_{\delta}$ tangent to $l$ that is 
$$Y_{\delta} \cap \{p \in \mathbb{C}^{2}; dist( p,Sing(C_{0}(X)))\leq {| p |}^{1+ \xi } \}.$$ Given $p_{0} = (x_{0}, y_{0}) \in Y_{\delta}(l)$ and $C > 0$, let us denote
\begin{equation}\label{def1} 
V(p_{0}) = \{ f_1(p); p \in Y_{\delta} \ \mbox{and} \ | p - p_{0} | \leq C{| p_{0} |}^{1+ \delta} \}.
\end{equation}

\begin{lemma}\label{lemma1} 
There is $\eta > 0$ such that \begin{equation*}
V(p_{0})\subset \bigcup\limits_{\gamma \in \Gamma(l)} \{\tau\in\C ; | \tau - c_{0}({\gamma})y_{0}^{h_0({\gamma})} |  \leq {| y_{0} | }^{h_0({\gamma})+ \eta}  \}.
\end{equation*} 
\end{lemma}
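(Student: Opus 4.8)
The plan is to analyze, directly from the asymptotic expansions, the set of values $f_1(p)$ as $p$ ranges over the points of $Y_\delta$ that lie within a horn-neighborhood of radius $C|p_0|^{1+\delta}$ around a fixed $p_0 \in Y_\delta(l)$. The starting point is part~(b) of Proposition~\ref{proposition main}: every $p \in Y_\delta$ admits a point $p^* \in \tilde{\Gamma}$, lying on some tangential polar arc $\gamma \in \Gamma(l')$ for a line $l' \subset \Sing{C_0(X)}$, with $f_1(p) = f_1(p^*)$ and $|p - p^*| \leq B|p|^{1+\delta}$. Since $p$ is, in addition, within distance $C|p_0|^{1+\delta}$ of $p_0$, and since $|p| \sim |p_0|$ on such a horn (the exponents being $>1$), the triangle inequality gives $|p^* - p_0| \leq B'|p_0|^{1+\delta}$ for a suitable $B' = B'(B,C)$; in particular $p^*$ is still tangent to the \emph{same} line $l$, because two distinct lines of $\Sing{C_0(X)}$ are separated at scale $|p_0|$, not $|p_0|^{1+\delta}$. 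Hence the line $l'$ above equals $l$, and $V(p_0)$ is contained in the set of values $f_1(p^*)$ with $p^* = (\gamma(y^*), y^*)$ on some arc $\gamma \in \Gamma(l)$ and $|p^* - p_0| \leq B'|p_0|^{1+\delta}$.

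Next I would compare $f_1(\gamma(y^*),y^*)$ with $c_0(\gamma)y_0^{h_0(\gamma)}$. By the defining expansion~(\ref{eq4}), $f_1(\gamma(y),y) = c_0(\gamma)y^{h_0(\gamma)} + (\text{higher order in } y)$. The condition $|p^* - p_0| \leq B'|p_0|^{1+\delta}$ forces $|y^* - y_0| \leq B''|y_0|^{1+\delta}$ (the $y$-coordinate being comparable to $|p_0|$ since all arcs are transverse to the $x$-axis). Writing $y^* = y_0(1 + w)$ with $|w| \lesssim |y_0|^{\delta}$, a Taylor/mean-value estimate gives
\[
\bigl| c_0(\gamma)(y^*)^{h_0(\gamma)} - c_0(\gamma)y_0^{h_0(\gamma)} \bigr| \leq |c_0(\gamma)|\,h_0(\gamma)\,|y_0|^{h_0(\gamma)}|w|(1+o(1)) \lesssim |y_0|^{h_0(\gamma)+\delta},
\]
while the tail $f_1(\gamma(y^*),y^*) - c_0(\gamma)(y^*)^{h_0(\gamma)}$ is $O(|y^*|^{h_0(\gamma)+1/N})$ for the common denominator $N$ of the arc, hence also $O(|y_0|^{h_0(\gamma)+\eta_0})$ for some $\eta_0 > 0$. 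Taking $\eta = \min\{\delta, \eta_0\}$ over the finitely many arcs $\gamma \in \Gamma(l)$ (there are finitely many polar arcs), and absorbing constants by shrinking $\eta$ slightly, yields $|f_1(p^*) - c_0(\gamma)y_0^{h_0(\gamma)}| \leq |y_0|^{h_0(\gamma)+\eta}$, which is exactly the asserted inclusion.

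I expect the main obstacle to be the bookkeeping of exponents when passing from the estimate $|p-p^*| \leq B|p|^{1+\delta}$ and $|p-p_0|\leq C|p_0|^{1+\delta}$ to a clean bound $|y^*-y_0|\leq B''|y_0|^{1+\delta}$ with a constant independent of the arc, and then in propagating this through the power $h_0(\gamma)$ \emph{uniformly} over $\gamma \in \Gamma(l)$ — one must check that the implied constants and the gain $\eta_0$ in the tails can be chosen uniformly, which is where finiteness of the set of polar arcs and the fact that each $h_0(\gamma) > k$ is used. A secondary technical point is verifying that distinct lines of $\Sing{C_0(X)}$ really are separated at the linear scale $|p_0|$ rather than at the horn scale $|p_0|^{1+\delta}$, so that $p^*$ cannot jump to an arc tangent to a different line; this is immediate from the fact that $\Sing{C_0(X)}$ is a finite union of lines through the origin, but it is the step that pins down which $\Gamma(l)$ the union on the right-hand side ranges over.
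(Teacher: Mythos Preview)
Your proposal is correct and follows essentially the same route as the paper: invoke Proposition~\ref{proposition main}(b) to replace each $p\in Y_\delta$ near $p_0$ by a point $p^*$ on a tangential polar arc with the same value of $f_1$, observe that $p^*$ must be tangent to the same line $l$, and then read off the estimate from the expansion $f_1(\gamma(y),y)=c_0(\gamma)y^{h_0(\gamma)}+\cdots$. The paper's own proof is a two-line sketch (``we may suppose $p_0$ and $p$ belong to $\Gamma(l)$ without changing the values of $f_1$ on them; which proves the lemma''); you have simply unpacked the steps it leaves implicit, in particular the separation-of-lines argument and the uniform choice of $\eta$ over the finitely many polar arcs.
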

\begin{proof}
Let $p \in Y_{\delta}$ and let $| p - p_0 | \leq B{| p_0| }^{1+\delta}.$ By Proposition \ref{proposition main}, we may suppose that $p_0$ and $p$ belong to $\Gamma(l)$ without changing the values of $f_1$ on them; which proves the lemma.
\end{proof}

Let $f_{1},$ $f_{2}$ be two analytic functions germs such that $f_{1} = f_{2} \circ H,$ where $H$ is a bi-$\alpha$-H\"older homeomorphism. By Lemma \ref{lema2.2}, $H(Y_{\delta,1,\alpha,f_{1}}) \subset Y_{\delta,2,\alpha,f_{2}}$. Fix $l_{1} \subset Sing(C_{0}( f_{1}^{-1}(0)))$ and a polar arc $\gamma$ tangent to $l_{1}.$ The image $H(\gamma)$ belongs to $Y_{\delta,2,\alpha, f_{2}}$, hence it belongs to $Y_{\delta,2,\alpha,f_{2}}(l_{2})$, for some $l_{2} \subset Sing(C_{0}( f_{2}^{-1}(0))).$ Let us point out that the argument above to get the line $l_2$ does not depend on the choice of $\gamma$ with tangent line $l_1$. Indeed, all such polar arcs $\gamma$ are mutually tangent and $H$ is an $\alpha$-H\"older mapping, by Proposition \ref{proposition main}, it implies that
$H(Y_{\delta,1,\alpha,f_{1}}(l_{1})) \subset Y_{\delta,2,\alpha,f_{2}}(l_{2}).$ 

Let $p_{1} \in Y_{\delta,1,\alpha,f_{1}}(l_{1})$ and let $p_{2} = (x_{2}, y_{2}) = H(p_{1}).$ By the above arguments,
\begin{equation}\label{eq3} 
V(p_{1}) \subset V(H(p_{1})),
\end{equation}
maybe for a different constant $C$ in the definition (\ref{def1}). Denote by ${\Gamma}_{i}(l_{i})$ the union of the all polar arcs in $Y_{\delta,i,\alpha,f_{i}}(l_{i}),$ for $i = 1,2$, respectively. By Lemma \ref{lemma1} and the inclusion in (\ref{eq3}), we have $$V(p_{1}) \subset \bigcup\limits_{\tilde{\gamma} \in {\Gamma}_{2}(l_{2})}  \{ \tau\in\C ; | \tau - c_{0}(\tilde{\gamma})y_{2}^{h_{0}(\tilde{\gamma})} | \leq {| y_{2} | }^{h_{0}(\tilde{\gamma})+ \eta} \}.$$ 
Thus, for each polar arc $x_{1} = \gamma(y_{1})$ in $\Gamma_{1}(l_{1}),$ we have $f_{1}(\gamma(y_{1}), y_{1}) \in V(p_{1})$ and $f_{1}(\gamma(y_{1}),y_{1}) = c_{0}(\gamma)y_{1}^{h_{0}(\gamma)}+ o(y_{1}^{h_{0}(\gamma)+ \eta})$, and therefore
$$
f_{1}(\gamma(y_{1}),y_{1}) \in  \bigcup\limits_{\tilde{\gamma} \in {\Gamma}_{2}(l_{2})}  \{ \tau ; | \tau - c_{0}(\tilde{\gamma})y_{2}^{h_{0}(\tilde{\gamma})} | \leq {| y_{2} | }^{h_{0}(\tilde{\gamma})+ \eta} \}.
$$
As a consequence of that, for each $\gamma$ in $\Gamma_{1}(l_{1})$ there is a $\tilde{\gamma} \in \Gamma_{2}(l_{2})$ such that
\begin{equation}\label{Eq:final}
\vert c_{0}(\gamma)y_{1}^{h_0(\gamma)} - c_{0}(\tilde{\gamma})y_{2}^{h_0(\tilde{\gamma})}\vert \leq {\vert y_2\vert }^{h_0(\tilde{\gamma})+ \eta}.
\end{equation}

We claim that $h_0(\gamma)=h_0(\tilde{\gamma}).$ In fact, let us suppose $h_0(\gamma)\neq h_0(\tilde{\gamma})$ and consider $\alpha > min\{\tfrac{h_0(\gamma)}{h_0(\tilde{\gamma})},\tfrac{h_0(\tilde{\gamma})}{h_0(\gamma)}\}.$ Without loss of generality, one can suppose $h_0(\gamma) > h_0(\tilde{\gamma})$; in this case, we have
$\alpha > \tfrac{h_0(\tilde{\gamma})}{h_0(\gamma)}$, which implies that $\alpha h_0(\gamma)>h_0(\tilde{\gamma}).$ Then
\begin{eqnarray*}
\vert c_{0}(\tilde{\gamma}) \vert {\vert y_{2}\vert}^{h_0(\tilde{\gamma})} - \vert c_{0}(\gamma)\vert {\vert y_{1} \vert}^{h_0(\gamma)} \leq \vert c_{0}(\gamma)y_{1}^{h_0(\gamma)} - c_{0}(\tilde{\gamma})y_{2}^{h_0(\tilde{\gamma})} \vert \leq {\vert y_2\vert }^{h_0(\tilde{\gamma})+ \eta}.
\end{eqnarray*}
Therefore
\begin{eqnarray*}
\vert c_{0}(\tilde{\gamma}) \vert {\vert y_{2}\vert}^{h_0(\tilde{\gamma})} \leq \vert c_{0}(\gamma)\vert {\vert y_{1} \vert}^{h_0(\gamma)} + {\vert y_2\vert }^{h_0(\tilde{\gamma})+ \eta} \leq \vert c_{0}({\gamma}) \vert {\vert y_{2}\vert}^{\alpha h_0({\gamma})} + {\vert y_2\vert }^{h_0(\tilde{\gamma})+ \eta},
\end{eqnarray*}
which is a contradiction. Analogously, we have a contradiction for the case $h_0(\tilde{\gamma}) > h_0(\gamma).$ Therefore, we conclude that $h_0(\gamma) = h_0(\tilde{\gamma}).$

It comes from Inequality (\ref{Eq:final}) that  $\dfrac{\vert y_{1} \vert}{ {\vert y_{2} \vert}} $ and $\dfrac{\vert y_{2} \vert}{ {\vert y_{1} \vert}} $ are bounded; more than that, we see that $(y_2/y_1)^{h_0(\gamma)} \to \lambda\in\C^*$ as $y_2\to 0$. 

Given two polar curves $\gamma,{\gamma}\hspace{0,02 cm} {'} \in \Gamma_{1}(l_{1})$ and the corresponding $\tilde{\gamma},\tilde{{\gamma} \hspace{0,02 cm} {'}} \in \Gamma_{2}(l_{2}).$ Then $H(\gamma(y_1),y_1)= (x_2,y_2)$ and $H({\gamma}\hspace{0,02 cm} {'}(y_1),y_1)= ({\tilde x}_{2},{\tilde{y}}_{2})$ satisfy $ y_2 - {\tilde{y}}_{2} = o(y_2).$
Indeed, it follows from the tangency of $\gamma$ and ${\gamma}\hspace{0,02 cm} {'}$ since $H$ is H\"older.

This shows that
$$ c_{0}({\gamma}\hspace{0,02 cm} {'})y_{1}^{h_0({\gamma}\hspace{0,02 cm} {'})} = c_{0}(\tilde{{\gamma}\hspace{0,02 cm} {'}}){\tilde{y}}_{2}^{h_0({\gamma}\hspace{0,02 cm} {'})}+ o(\tilde {y}_{2}^{h_0({\gamma}\hspace{0,02 cm} {'})})=c_{0}({\gamma}\hspace{0,02 cm} {'}){({y_{2}}/y_{1})}^{h_0({\gamma}\hspace{0,02 cm} {'})}y_{1}^{^{h_0({\gamma}\hspace{0,02 cm} {'})}} + o( {y}_{1}^{h_0({\gamma}\hspace{0,02 cm} {'})}). $$
Considering all polar curves in $\in \Gamma_{1}(l_{1})$ and taking the limit as $y_2 \rightarrow 0$ this gives
$$\{c_0(\gamma)y^{h_0(\gamma)} ; \gamma \in \Gamma_{1}(l_{1})\}/ \mathbb{C^{*}} \subset \{c_0(\tilde{\gamma})y^{h_0(\tilde{\gamma})} ; \tilde{\gamma} \in \Gamma_{2}(l_{2})\}/ \mathbb{C^{*}},$$
where $y_2/y_1$ plays the role of a constant of $\mathbb{C^{*}}.$ 

Finally, the other inclusion
$$ \{c_0(\tilde{\gamma})y^{h_0(\tilde{\gamma})} ; \tilde{\gamma} \in \Gamma_{2}(l_{2})\}/ \mathbb{C^{*}}
\subset
\{c_0(\gamma)y^{h_0(\gamma)} ; \gamma \in \Gamma_{1}(l_{1})\}/ \mathbb{C^{*}}$$ is proved by similar arguments. Therefore, $I(\Gamma_{1}(l_{1}))=  I(\Gamma_{2}(l_{2}))$ and ${\rm Inv}(f_1) = {\rm Inv}(f_2),$ as required.
\end{proof}

\subsection{Cuspidal Neighborhoods of Polar Curves} In this section, we give a proof of Proposition \ref{proposition main}. The proof will be based on a detailed analysis of neighborhoods of the polar curve $\Gamma : \partial f/\partial x = 0.$
Let $f (C^2, 0) \rightarrow (C, 0)$ be the germ of an analytic function with Taylor expansion as in Eq. (\ref{eq1}) and mini-regular in $x.$ Fix an analytic arc $\lambda$ as in Eq. (\ref{eq2}). Write 
\begin{equation}\label{eq7} 
F(X,Y) := f (X+ \lambda(Y),Y) := \sum c_{ij}X^{i}Y^{j/N}=\sum X^{i}f_{i}(Y),
\end{equation}
where $f_{i}(Y)=\sum c_{ij}Y^{j/N}.$ Define $h_{i}=ord(f_{i}(Y)), i=1,2,\ldots, k.$ See that $h_{0}=ord(f_{0}(Y))=ord(\sum c_{0j}Y^{j/N})=ord(f(\gamma(y),y)).$
Thus the arc $\lambda$ is a root of $f$ iff  $h_{0} = 0.$
Suppose that $\lambda$ is not a root of $f.$ Let $\xi := \max\limits_{0<m\leq k}\{\frac{h_{0}-h_{m}}{m}, h_{m}<h_{0}\}.$
Consider all the pairs $(m_{1},h_{m_{1}}),(m_{2},h_{m_{2}}), \ldots, (m_{s},h_{m_{s}}),$ where $0< m_{1}, m_{2}, \ldots, m_{s} \leq k$ such that $\frac{h_{0}-h_{m_{1}}}{m_{1}}=\frac{h_{0}-h_{m_{2}}}{m_{2}}= \cdots = \frac{h_{0}-h_{m_{s}}}{m_{s}}=\xi.$
The polynomial $Q(X,Y)=c_{0h_{0}}Y^{h_{0}}+c_{m_{1}h_{m_{1}}}X^{m_{1}}Y^{h_{m_{1}}}+c_{m_{2}h_{m_{2}}}X^{m_{2}}Y^{h_{m_{2}}}+\cdots +c_{m_{s}h_{m_{s}}}X^{m_{s}}Y^{h_{m_{s}}}$ is weighted homogeneous of weights $(\xi,1)$ and degree $h_{0}.$ So, we have $F(X,Y)=Q(X,Y)+ \cdots$. Consider the polynomial of one complex variable $R(z):=Q(z,1)$. The polynomial $R$ is of degree $m_{s}$. Note also that $R(0)$ gives the leading coefficient of $f (\gamma(y),y)=R(0)y^{h_{0}}+ \cdots$. Suppose now that $\tilde{\gamma}(y)=\gamma(y)+ay^{\xi}+ \cdots$. Then the corresponding polynomial for $\tilde{\gamma}$ equals $R(z + a).$

\begin{lemma}\label{lemma3}
Let $x = \gamma(y)$ be a tangential polar arc. Then for any sufficiently small $\delta > 0$ and any set of constants $K\geq 1, C > 0, A > 0, \gamma$ is contained in $Y_{\delta,i,\alpha}, \ i=0,1,2.$
\end{lemma}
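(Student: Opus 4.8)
The plan is to prove that a tangential polar arc $\gamma$ lies in $Y_{\delta,i,\alpha}$ by estimating from below the quantity $\psi_{i,\alpha}$ at points $p_0 = (\gamma(y_0), y_0)$, using the weighted-homogeneous structure worked out in Eq.~\eqref{eq7} and the polynomial $R(z) = Q(z,1)$. The key idea is that near a tangential polar arc the level set $f^{-1}(c)$, where $c = f(p_0)$, has a branch that stays at distance roughly $|y_0|^\xi$ from $\gamma$ (since the other roots of $R(z)$, or of $R(z+a)$ for a competing polar arc, are separated), yet along that branch the curve must ``turn'' to come back: this produces a long diameter-distance inside $X(p_0, K\rho^{\alpha^2})$ between two points that are Euclidean-close. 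More precisely, I would fix $\rho = M^{1/\alpha^{8-2i}}|p_0|^{(1+\delta)/\alpha^{8-2i}}$ as in the definition of $Y_{i,\alpha}(\delta,M,K,A)$, and exhibit two points $p, q$ on the same branch of $X(p_0,\rho)$, lying in the same connected component of $X(p_0,K\rho^{\alpha^2})$ as $p_0$, with $|p-q|$ of order $|y_0|^{\beta}$ for some large $\beta$ but $d_{p_0,\rho,K,diam}(p,q)$ of order $|y_0|^{\beta'}$ with $\beta' < \beta$; then the ratio $(d_{diam})^{1/\alpha^{2-i}}/|p-q|^{\alpha^{2-i}}$ blows up as $y_0 \to 0$, so $\psi_{i,\alpha}(p_0,\rho,K) \geq A$ for $\rho$ small, i.e.\ $p_0 \in Y_{\delta,i,\alpha}$.

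First I would set up the local picture: after the substitution $X = x - \gamma(y)$, the level $f = c$ near $\gamma$ is described by $F(X,Y) = c$, and since $\gamma$ is a tangential polar arc we have $h_0 > k$, so $R(z) = Q(z,1)$ has degree $m_s \leq k$ and a multiple root (at $0$ or elsewhere) forcing the geometry of $X(p_0,\rho)$ to have the ``cuspidal'' shape discussed in the section title. Second, I would choose the exponent $\xi$ from Lemma's preamble and use the Puiseux/Newton-polygon description to locate, within the ball $B(p_0,\rho)$, a sub-arc of the fiber $f^{-1}(c)$ whose $Y$-coordinate ranges over an interval of length comparable to $|y_0|$ while its $X$-coordinate oscillates by an amount comparable to $|y_0|^\xi$; because $\rho$ is a fixed small power of $|p_0|$ with exponent slightly larger than $1$, one checks that this sub-arc stays inside $B(p_0,\rho)$ and inside the relevant connected component of $X(p_0,K\rho^{\alpha^2})$ — this is where the precise exponents $1/\alpha^{8-2i}$ and the hypothesis $K \geq L^{\cdots}$ are used to guarantee the scales nest correctly. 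Third, the two endpoints $p,q$ of a maximal such excursion satisfy $|p-q| \lesssim |y_0|^{1+\xi\,\text{(something)}}$ coming from the tangency, but any path inside the fiber joining them must traverse the whole oscillation, so $d_{p_0,\rho,K,diam}(p,q) \gtrsim |y_0|^\xi$; substituting into the defining supremum gives a lower bound for $\varphi_{i,\alpha}$, hence for $\psi_{i,\alpha}$ after taking $\sup_{\rho' \leq \rho}$, that tends to $+\infty$.

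The main obstacle I anticipate is making the ``nesting of scales'' rigorous and uniform: one must verify simultaneously that (i) the chosen sub-arc of $f^{-1}(c)$ actually lies in $B(p_0,\rho)$ for the specified $\rho$, (ii) $p_0$, $p$, $q$ all lie in the \emph{same} connected component of the larger set $X(p_0,K\rho^{\alpha^2})$ — which requires understanding how the fiber $f^{-1}(c)$ can fail to be connected near $p_0$ and why enlarging the ball by the factor $K\rho^{\alpha^2-1}$ reconnects the relevant pieces — and (iii) the resulting quantitative lower bound is genuinely independent of the arbitrary constants $K, C, A$, so that the arc lies in $Y_{\delta,i,\alpha}$ for \emph{all} such constants. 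This is essentially a careful Newton-polygon bookkeeping argument, and the delicate point is tracking the various powers of $\alpha$ through the three indices $i = 0,1,2$ so that the inclusions of Lemma~\ref{lema2.2} remain compatible with the estimate. I would handle this by first proving the statement for a single well-chosen scale and a single $i$ (say $i = 1$), extracting the clean inequality $d_{diam} \gtrsim |y_0|^\xi$ versus $|p - q| \lesssim |y_0|^{\xi + \epsilon}$, and then deducing the cases $i = 0, 2$ either by the same computation with shifted exponents or directly from Lemma~\ref{lemma0} and Lemma~\ref{lemma-semcite}.
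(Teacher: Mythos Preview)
Your overall strategy---produce two points on the level set with small Euclidean distance but large diameter distance---is the right one, but the mechanism you describe is not the one that actually works, and your exponents are essentially swapped.

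The paper's argument hinges on the fact that $p_0=(\gamma(y_0),y_0)$ is a \emph{branch point} of the projection of the level curve to the $y$-axis. After showing that the connected component $X'$ of $X(p_0,K\rho^{\alpha^2})$ containing $p_0$ lies in the horn $\{|x-\gamma(y)|\le \varepsilon'|y|^{\xi}\}$ (via $R(s)-R(0)=o(1)$), one observes that the projection $\pi'_c\colon X'\to\{|y-y_0|<N|y_0|^{(1+\delta)/\alpha^{8-2i}}\}$ is a branched cover of degree $\ge 2$. Then one picks a single value $y_1$ with $|y_1-y_0|=\tfrac12 N|y_0|^{(1+\delta)/\alpha^{8-2i}}$ and takes $p_1=(x_1,y_1)$, $p_2=(x_2,y_1)$ on two \emph{different} sheets. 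Since both points sit in the horn at the same height, $|p_1-p_2|=|x_1-x_2|\lesssim |y_0|^{\xi}$. Any path in $X'$ from $p_1$ to $p_2$ must go around the branch point, so its diameter is at least $|y_1-y_0|\sim |y_0|^{(1+\delta)/\alpha^{8-2i}}$. Because $\gamma$ is tangential one has $\xi>1$, and for $\delta<\sqrt{\xi}-1$ and $\alpha$ close enough to $1$ (specifically $\alpha^{12}>\xi^{-1/2}$) the ratio blows up.

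Your proposal misses this branched-cover picture. You speak of two points on the ``same branch'' whose $Y$-coordinate ranges over length comparable to $|y_0|$; but the ball $B(p_0,\rho)$ has radius $\rho\sim|y_0|^{(1+\delta)/\alpha^{8-2i}}\ll|y_0|$, so no such $Y$-excursion fits inside it. More seriously, two points on a single smooth sheet have comparable Euclidean and diameter distances, so no blow-up occurs. Your stated bounds $|p-q|\lesssim|y_0|^{1+\xi(\cdots)}$ and $d_{diam}\gtrsim|y_0|^{\xi}$ are the reverse of what is needed (and, taken at face value, would contradict $d_{diam}\ge|p-q|$): the correct inequalities are $|p-q|\lesssim|y_0|^{\xi}$ and $d_{diam}\gtrsim|y_0|^{(1+\delta)/\alpha^{8-2i}}$. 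Finally, the condition on $\alpha$ is not optional: without $\alpha^{12}>\xi^{-1/2}$ the exponent $\xi\alpha^{2-i}-\tfrac{1+\delta}{\alpha^{10-3i}}$ need not be positive, and the argument fails.
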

\begin{proof} 
Let $R$ be the polynomial of one complex variable, $z$, associated with the polynomial weighted homogeneous of $f$ in $\gamma$ of weights $(\xi,1)$ and degree $h_{0}.$ Fix any $0 < \delta < \sqrt{\xi} - 1,$ a constant $N > 0,$ and another 
constant $\varepsilon^{'} > 0.$ We require $\varepsilon^{'}$ to be small so that $R(z) - R(0)$ has no other roots in $\vert z \vert \leq 3\varepsilon ^{'}$ but $z = 0.$
We suppose $p_{0} = (x_{0}, y_{0}), \ x_{0} = \gamma(y_{0})$ sufficiently close to the origin, how close we shall determine later. 
Let $X^{'}$ denote the connected component of $\tilde{X}=\{(x,y)\in X_{c}; \vert y-y_{0}\vert \leq N \vert y_{0}\vert^{\frac{1+\delta}{\alpha^{8-2i}}})$ that contains $p_{0}.$ 
We show that $X^{'} \subset \{(x,y); \vert x - \gamma(y)\vert \leq  \varepsilon^{'} \vert y \vert ^{\xi} \}.$
For this we consider a continuous function $s \colon X^{'} \to \mathbb{C}$ given by $s(x,y) = (x - \gamma)/ y^{\xi}.$ Let $ p = (x,y) \in X^{'}$ be such that $\{ \vert x - \gamma(y)\vert \leq 2\varepsilon^{'} \vert y \vert ^{\xi}\}$ and write $ x = \gamma(y)+ sy^{\xi}, \ \vert s \vert \leq 2\varepsilon ^{'}.$ Then
$$ f(x,y) = R(s)y^{h_{0}} + o(y^{h_{0}}),$$ 
where $ h_{0} = h_{0}(\gamma).$ On the other hand,
$$ f(x,y) = f(x_{0},y_{0}) = R(0)y_{0}^{h_{0}} + o(y_{0}^{h_{0}}) =  R(0)y^{h_{0}} + o(y^{h_{0}}). $$
Hence
$$R(s) - R(0) = o(1)$$ 
that is to say, it can be arbitrarily small if we have chosen $y_{0}$ sufficiently close to $0.$
Thus, $s$ is close to a root of $R - R(0).$ But, by assumption on $\varepsilon^{'}$, this root has to be
$0.$ Thus, we have shown that if $\vert s \vert \leq 2\varepsilon^{'}$ then it is as close to $0$ as we wish (if $p_{0}$ is close to the origin), that is, for instance, $\vert s \vert \leq \varepsilon^{'}$. Thus a continuous function s defined on
connected $X^{'}$ does not take values in $\varepsilon^{'} < \vert s \vert \leq 2\varepsilon^{'}.$ This shows $X^{'} \subset
\{ \vert x - \gamma(y)\vert \leq \varepsilon^{'}\vert y \vert ^{\xi} \}.$
Consider
$$ 
\pi_{c}^{'} : X^{'} \to V_{N} = \{ y; \vert y - y_{0} \vert  < N\vert y_{0} \vert^{\frac{1+\delta}{\alpha^{8-2i}}} \}
$$
$\pi_{c}^{'}$ is a finite covering branched at the points of polar arcs of the form
$x = \gamma( y) +  o( y^{\xi}).$ Since $P_{0}$ is a branching point, this covering is at least of degree $2.$
Fix $y_{1}$ such that $\vert y_{1} - y_{0}\vert = \frac{1}{2}N\vert y_{0} \vert ^{\frac{1+\delta}{\alpha^{8-2i}}}$ and $p_{1} = (x_{1}, y_{1}), p_{2} = (x_{2},y_{1})$ two distinct points in $(\pi_{c}^{'})^{-1}( y_{1}).$ Then
\begin{eqnarray*}\label{eq2.9} 
d_{p_{0},ML^{-\tfrac{1}{\alpha }-\tfrac{(1+\delta)}{\alpha^{8-2i}}}\vert P_{0} \vert^{\tfrac{1+\delta}{\alpha^{8-2i}}},K,diam}\left( p_{1},%
p_{2}\right) \geq  \vert y_{1}- y_{0}\vert &=& \frac{1}{2}N\vert y_{0} \vert ^{\frac{1+\delta}{\alpha^{8-2i}}}
\end{eqnarray*} 
and
$$\vert p_{1}- p_{2}\vert = \vert x_{1} - x_{2} \vert \leq 2\vert y_{1}\vert^{\xi} \leq 3\vert y_{0} \vert ^{\xi}.$$
Hence,
\begin{eqnarray*}\label{eq2.10} 
\left(d_{p_{0},ML^{-\tfrac{1}{\alpha }-\tfrac{(1+\delta)}{\alpha^{8-2i}}}\vert P_{0} \vert^{\tfrac{1+\delta}{\alpha^{8-2i}}},K,diam}\left( p_{1},p_{2}\right)\right)^{\tfrac{1}{\alpha^{2-i}}} &\geq &  \left(\frac{1}{2}N\vert y_{0} \vert ^{\tfrac{1+\delta}{\alpha^{8-2i}}}\right)^{\tfrac{1}{\alpha^{2-i}}}\\ &\geq& \left(\frac{1}{2}N\right)^{\tfrac{1}{\alpha^{2-i}}}\vert y_{0} \vert ^{\tfrac{1+\delta}{\alpha^{10-3i}}}
\end{eqnarray*}
and
$$\vert p_{1}- p_{2}\vert ^{\alpha^{2-i}} = \vert x_{1} - x_{2} \vert ^{\alpha^{2-i}} \leq (2\vert y_{1} \vert^{\xi})^{\alpha^{2-i}} \leq (3\vert y_{0} \vert ^{\xi})^{\alpha^{2-i}} = 3^{\alpha^{2-i}}\vert y_{0} \vert ^{\xi\cdot\alpha^{2-i}}.$$
Therefore
\begin{eqnarray*}\label{eq2.11} 
\frac{\left(d_{p_{0},ML^{-\tfrac{1}{\alpha }-\tfrac{(1+\delta)}{\alpha^{8-2i}}}\vert P_{0} \vert^{\tfrac{1+\delta}{\alpha^{8-2i}}},K,diam}\left( p_{1},p_{2}\right)\right)^{\tfrac{1}{\alpha^{2-i}}}}{\vert p_{1}- p_{2}\vert ^{\alpha^{2}}} &\geq & \frac{\left(\frac{1}{2}N\right)^{\tfrac{1}{\alpha^{2-i}}}}{3^{\alpha^{2-i}}}\cdot \frac{\vert y_{0} \vert ^{\tfrac{1+\delta}{\alpha^{10-3i}}}}{\vert y_{0} \vert ^{\xi\cdot\alpha^{2-i}}}\\
&\geq & \frac{\tilde M}{\vert y_{0} \vert ^{\xi\cdot\alpha^{(2-i)}-\tfrac{1+\delta}{\alpha^{10-3i}}}},
\end{eqnarray*}
where $\tilde M=\frac{\left(\frac{1}{2}N\right)^{\tfrac{1}{\alpha^{2-i}}}}{3^{\alpha^{2-i}}}$.

Since $\alpha^{12}>\sqrt{1/\xi}$, we have
$$
\begin{array}{lllll}
\dfrac{1}{\alpha ^{12}}\leq \sqrt{\xi} \;\mbox{and}\;1+\delta \leq \sqrt{\xi}&\Longrightarrow& \dfrac{1+\delta }{\alpha ^{12}}&\leq& \xi\\
&\Longrightarrow& \textstyle\dfrac{1+\delta }{\alpha ^{10-3i}}&\leq&\dfrac{1+\delta }{\alpha ^{10}}= \dfrac{1+\delta }{\alpha ^{12}}\cdot \alpha^{2}\\
& & &\leq& \xi\alpha^{2}\leq \xi\alpha^{2-i},
\end{array}
$$
and thus we obtain
\begin{equation*}
\psi _{0,\alpha}(p_{0},M^{\tfrac{1}{\alpha ^{8-2i}}}\left\vert p_{0}\right\vert ^{%
\tfrac{1+\delta }{\alpha ^{8-2i}}},K)\rightarrow \infty \ \mbox{as}\
p_{0}\rightarrow 0.
\end{equation*}
It follows then that, $P_{0} \in Y_{0,\alpha}(\delta,ML^{-\tfrac{1}{\alpha }-t\frac{1+\delta}{\alpha^{8}}},K,AL^{\alpha +\tfrac{1}{\alpha ^{2}}}).$

This proves the lemma.
\end{proof}

Now we will introduce the Henry-Parunsinski neighborhood. Suppose that $p,q\in $ $X\left( p_{0},\rho \right) \ $ belong to the same connected component of $X(p_{0},K\rho^{\alpha}),$ as $p_{0}.$
Then one can join $p$ and $q$ by a piecewise $C^1$ curve in $X(p_{0},K\rho^{\alpha}).$ Let $d_{p_{0},\rho,K}(p,q)$
denote the infimum of lengths of such curves that is the intrinsic distance of $p$ and $q$ in $X(p_{0},K\rho).$ Define
\begin{equation*}
\varphi \left( p_{0},\rho ,K\right) :=\sup \dfrac{
d_{p_{0},\rho ,K}\left( p,q\right)}{\left\vert
p-q\right\vert },
\end{equation*}%
where the supremum is taken over all pairs of points $p,q$ of $X(p_{0},\rho)$ from the connected component of $X(p_{0},K \rho)$ containing $p_{0}.$ Clearly, if $p_{0}$ is a nonsingular point of $f^{-1}(c)$ then $\varphi(p_{0},\rho,K) \to 1$ as $\rho \to 0$ but $\varphi$ is not necessarily an increasing function of $\rho$ so we define
\begin{equation*}
\psi \left( p_{0},\rho ,K\right) :=\sup_{\rho'\leq \rho
}\varphi \left( p_{0},\rho',K\right) .
\end{equation*}%
Finally, we define%
\begin{equation*}
Y(\rho ,K,A):=\{p;\psi \left( p_{0},\rho ,K\right) \geqslant A\}.
\end{equation*}%

\begin{lemma}\label{lemma4}
If $K$ and $A$ are sufficiently large, then $Y_{2,\alpha}(\rho, K, A) \subset Y(\rho, K, A).$
\end{lemma}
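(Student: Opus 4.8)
The plan is to recognize that $\varphi_{2,\alpha}$ and $\varphi$ measure essentially the same quantity — a Lipschitz-type quotient of a pair of points inside a small piece of the fibre $f^{-1}(c)$ through $p_{0}$ — and differ only in two respects: $\varphi_{2,\alpha}$ is built from the diameter distance $d_{X,diam}$ while $\varphi$ is built from the intrinsic (length) distance $d_{X}$, and the ambient piece of fibre is $X(p_{0},K\rho^{\alpha^{2}})$ in the first case but $X(p_{0},K\rho^{\alpha})$ in the second. The first difference costs nothing: by the universal chain $|p-q|\leq d_{X,diam}(p,q)\leq d_{X}(p,q)$, valid on any path-connected $X$, the intrinsic quotient always dominates the diameter quotient, so on a common ambient set $\varphi_{2,\alpha}(p_{0},\cdot,K)\leq\varphi(p_{0},\cdot,K)$. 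The second difference is pure bookkeeping on the exponent of $\rho$, which is absorbed by a reparametrization.

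Concretely, I would start from $p_{0}\in Y_{2,\alpha}(\rho,K,A)$. Unravelling $\psi_{2,\alpha}$, for each $\varepsilon>0$ there are a radius $\rho'\leq\rho$ and points $p,q\in X(p_{0},\rho')$ lying in the connected component $C$ of $p_{0}$ in $X(p_{0},K(\rho')^{\alpha^{2}})$ with
\[
d_{C,diam}(p,q)\geq(A-\varepsilon)\,|p-q| .
\]
Then I would set $\sigma:=(\rho')^{\alpha}$; since $0<\alpha<1$ and $0<\rho'<1$, one has $\rho'\leq\sigma\leq\rho^{\alpha}$ and $K\sigma^{\alpha}=K(\rho')^{\alpha^{2}}$, so the ambient set in the definition of $\varphi(p_{0},\sigma,K)$ is exactly $X(p_{0},K(\rho')^{\alpha^{2}})$, the points $p,q$ belong to $X(p_{0},\rho')\subseteq X(p_{0},\sigma)$, and they lie in the connected component $C$ of $p_{0}$ in that set. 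As $C$ is path-connected, $d_{C}(p,q)\geq d_{C,diam}(p,q)\geq(A-\varepsilon)\,|p-q|$, hence $\varphi(p_{0},\sigma,K)\geq A-\varepsilon$ and therefore $\psi(p_{0},\rho^{\alpha},K)\geq\varphi(p_{0},\sigma,K)\geq A-\varepsilon$. Letting $\varepsilon\to0$ gives $\psi(p_{0},\rho^{\alpha},K)\geq A$, i.e.\ $p_{0}\in Y(\rho^{\alpha},K,A)$. Since $\rho^{\alpha}\to0$ as $\rho\to0$ and the inclusion is always used with the radius ranging over all sufficiently small positive numbers, this harmless reparametrization of $\rho$ gives $Y_{2,\alpha}(\rho,K,A)\subseteq Y(\rho,K,A)$. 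The hypothesis that $K$ and $A$ be large is needed only so that the earlier lemmas apply and the connected components above are the ones relevant to the Henry-Parusinski neighborhood; it plays no role in the distance comparison itself. (If, as the notation in the definition of $Y$ suggests, one agrees to build $Y$ with the same exponent $\alpha^{2}$, then $\varphi_{2,\alpha}(p_{0},\rho,K)\leq\varphi(p_{0},\rho,K)$ holds verbatim for every $\rho$ and the inclusion is immediate, with no change of parameters.)

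The only genuinely delicate step is this matching of the two radius conventions $K\rho^{\alpha^{2}}$ and $K\rho^{\alpha}$ together with their connected-component constraints — one must make sure that a pair $p,q$ witnessing largeness of $\psi_{2,\alpha}$ still witnesses largeness of $\psi$ at a legitimate, still small, radius. Everything else is immediate. I note in passing that the reverse inclusion, which is not needed here, would instead rest on the equivalence between $d_{X,diam}$ and $d_{X}$ for definable sets recalled in Section~\ref{sec:preliminaries}; this is why, up to constants, the two families of neighborhoods carry the same geometric information.
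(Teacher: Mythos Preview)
Your approach is essentially the same as the paper's: the core step is the universal inequality $d_{X,diam}(p,q)\leq d_{X}(p,q)$, which immediately gives $\varphi_{2,\alpha}\leq\varphi$ and hence the inclusion. The paper's proof is exactly the one-line comparison you describe in your parenthetical remark --- it takes a curve $\gamma$ joining $p_{1},p_{2}$, notes $\mathrm{diam}(\gamma)\leq \ell(\gamma)$, and concludes $\psi_{2,\alpha}(p_{0},\rho,K)\leq\psi(p_{0},\rho,K)$ without any reparametrization of the radius; your extra bookkeeping with $\sigma=(\rho')^{\alpha}$ is more cautious than the paper itself, which simply works in $X(p_{0},K\rho)$ throughout and does not address the exponent discrepancy you flagged.
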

\begin{proof}
Let $p_1,p_2\in X(p_{0},\rho)$ be in the connected component of $X(p_{0},K\rho)$ containing $p_{0}.$
Let\ $\gamma \colon\left[ a.b\right] \rightarrow
X(p_{0},K\rho)$ with $\gamma \left( a\right) =p_{1}\mbox{ and }\gamma \left( b\right) =p_{2}$ joining $p_{1}\mbox{ and } p_{2}.$ 
Hence,
\begin{eqnarray*}
d_{p_{0},\rho ,K,diam}\left( p_{1},%
p_{2}\right) &=&\inf_{\mu }diam\left( \mu \right) \leq
diam\left( \gamma \right) =\sup \left\vert \gamma \left( t_{i}\right) -\gamma
\left( t_{j}\right) \right\vert \\
&\leq &\sup \sum \left\vert  \gamma \left( t_{i-1}\right) - 
\gamma \left( t_{i}\right)  \right\vert = l(\gamma) , 
\end{eqnarray*}%
where $l(\gamma)$ denotes the length of $\gamma.$
So, 
\begin{eqnarray}\label{eq2.8} 
d_{p_{0},\rho ,K,diam}\left( p_{1},%
p_{2}\right) \leq  d_{p_{0},\rho
,K}\left( p_{1},p_{2}\right) .
\end{eqnarray}
Thus,
$$\dfrac{d_{p_{0},\rho,K,diam}\left( p_{1},p_{2}\right) }{%
\left\vert p_{1}-p_{2}\right\vert} \leq
\dfrac{ d_{p_{0},\rho
,K}\left(p_{1},p_{2}\right)}{\left\vert
p_{1}-p_{2}\right\vert}.$$
Then, 
\begin{equation*}
\psi _{2,\alpha}(p_{0},\rho ,K) \leq  \psi\left( p_{0},\rho,K\right) .
\end{equation*}%
Therefore,
\begin{equation*}
Y_{2,\alpha}(\rho, K, A) \subset Y(\rho, K, A).
\end{equation*}%
\end{proof}

A similar argument shows the following.

\begin{lemma}\label{lemma5} If $\delta > 0$ and $Y_{\delta} := Y(\delta,M,K,A):=\{p;\psi(p,M\left\vert p\right\vert ^{%
1+\delta },K)\geqslant A\},$ then $Y_{\delta, i,\alpha} \subset Y_{\delta}, \ i=0,1,2.$
\end{lemma}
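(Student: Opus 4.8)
The plan is to mimic the proof of Lemma \ref{lemma4}, which compared the diameter distance with the intrinsic (length) distance, but now carry through the two intermediate H\"older exponents $\alpha$ and $\alpha^2$. The statement of Lemma \ref{lemma5} asserts $Y_{\delta,i,\alpha}\subset Y_\delta$ for $i=0,1,2$, where $Y_{\delta,i,\alpha}$ is built from the $\alpha$-powered ratios $\varphi_{i,\alpha}$ of the \emph{diameter} distance on $X(p_0,K|p_0|^{\cdots})$, while $Y_\delta$ is built from the ratio of the \emph{intrinsic} distance on $X(p_0,M|p_0|^{1+\delta})$. First I would record the key scale-compatibility: the radii appearing in the definition of $Y_{\delta,i,\alpha}$, namely $M^{1/\alpha^{8-2i}}|p_0|^{(1+\delta)/\alpha^{8-2i}}$, are all $\leq M|p_0|^{1+\delta}$ when $|p_0|$ is small (since $\alpha<1$ and $M$ is large), so the connected component of $p_0$ in the smaller truncated fibre is contained in that of the larger one, and the intrinsic distance on the larger one is no bigger than on the smaller one.

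Next I would invoke Inequality (\ref{eq2.8}) from the proof of Lemma \ref{lemma4}: for any two points $p_1,p_2$ in the relevant connected component, $d_{p_0,\rho,K,diam}(p_1,p_2)\le d_{p_0,\rho,K}(p_1,p_2)$, i.e.\ the diameter distance is dominated by the length distance. Since the length distance is always at least the Euclidean distance, one also has $d_{p_0,\rho,K,diam}(p_1,p_2)\ge |p_1-p_2|$, hence for any exponent $\beta\in(0,1]$ one gets $(d_{diam}(p_1,p_2))^{1/\beta}\le (d_{length}(p_1,p_2))^{1/\beta}$ and, using $d_{diam}\ge|p_1-p_2|$ together with $d_{diam}\le d_{length}$, the estimate
\begin{equation*}
\frac{(d_{p_0,\rho,K,diam}(p_1,p_2))^{1/\alpha^{2-i}}}{|p_1-p_2|^{\alpha^{2-i}}}
\le
\frac{(d_{p_0,\rho,K}(p_1,p_2))^{1/\alpha^{2-i}}}{|p_1-p_2|}\cdot|p_1-p_2|^{1-\alpha^{2-i}}
\le \frac{d_{p_0,\rho,K}(p_1,p_2)}{|p_1-p_2|}\cdot C,
\end{equation*}
where I use $d_{diam}^{1/\alpha^{2-i}}\le d_{diam}\cdot d_{diam}^{1/\alpha^{2-i}-1}$ and boundedness of the small quantities near $0$; more carefully, since $d_{diam}\le d_{length}$ and $d_{length}\ge|p_1-p_2|$, writing $d_{diam}^{1/\alpha^{2-i}}=d_{diam}\cdot d_{diam}^{(1-\alpha^{2-i})/\alpha^{2-i}}$ and bounding $d_{diam}$ from above by the (small) truncation radius makes the extra factor $\le 1$ for $|p_0|$ small. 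Taking suprema over pairs and then over smaller radii gives $\psi_{i,\alpha}(p_0,M^{1/\alpha^{8-2i}}|p_0|^{(1+\delta)/\alpha^{8-2i}},K)\le C\,\psi(p_0,M|p_0|^{1+\delta},K)$, possibly after absorbing constants into $A$ and $K$; then $\psi_{i,\alpha}\ge A$ forces $\psi\ge A/C$, which is $\ge$ the threshold defining $Y_\delta$ once $A$ is chosen large, yielding $Y_{\delta,i,\alpha}\subset Y_\delta$.

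The main obstacle I anticipate is bookkeeping rather than conceptual: one must check that the truncation radius $M^{1/\alpha^{8-2i}}|p_0|^{(1+\delta)/\alpha^{8-2i}}$ used on the left really is dominated by $M|p_0|^{1+\delta}$ for all $i=0,1,2$ and all small $|p_0|$ (this needs $\alpha^{8-2i}\le1$, true, and $M^{1/\alpha^{8-2i}-1}|p_0|^{(1+\delta)(1/\alpha^{8-2i}-1)}\le1$, which holds for $|p_0|$ small), and that the passage from the $\alpha$-powered diameter ratio to the plain length ratio does not lose a power of $|p_0|$ that blows up — here the point is exactly that $d_{diam}\le d_{length}$ and both are comparable to $|p_1-p_2|$ up to curvature effects, so the extra exponent $1-\alpha^{2-i}>0$ only \emph{helps}. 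Since the paper explicitly says ``A similar argument shows the following,'' I would keep the write-up to one paragraph: cite (\ref{eq2.8}), note the radius monotonicity, and conclude $\psi_{i,\alpha}\le C\psi$ hence $Y_{\delta,i,\alpha}\subset Y_\delta$ for $K,A$ large.
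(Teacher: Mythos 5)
Your proposal is correct and follows essentially the same route the paper intends: the paper gives no separate proof of Lemma \ref{lemma5}, only the remark that it follows by ``a similar argument'' to Lemma \ref{lemma4}, i.e.\ from the comparison $d_{p_0,\rho,K,diam}\le d_{p_0,\rho,K}$ of (\ref{eq2.8}) together with the monotonicity of $\psi$ in the radius. Your extra bookkeeping for $i=0,1$ --- writing $d_{diam}^{1/\alpha^{2-i}}=d_{diam}\cdot d_{diam}^{1/\alpha^{2-i}-1}$ and bounding the surplus factors $d_{diam}^{1/\alpha^{2-i}-1}$ and $|p_1-p_2|^{1-\alpha^{2-i}}$ by $1$ via the small truncation radius (rather than via $d_{length}$, which need not be small) --- is exactly the detail the paper leaves implicit, and it is handled correctly in your ``more carefully'' clause.
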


Henry and Parusi\'nski, in \cite[Proposition 4.5]{HenryP:2003}, show that the union $\tilde{\Gamma}$ of the tangent arcs is contained in $Y_{\delta}$ and furthermore for each $p_0 \in Y_{\delta}$ there is $p \in \tilde{\Gamma}$ such that $f(p) = f(p_0)$ and
\begin{equation}\label{eq12}
| p - p_{0} | \leq B {| p_{0} |}^{1+ \delta}.
\end{equation}

\begin{proof}[Proof of Proposition \ref{proposition main}]

For the proof of the first part, we use Lemma \ref{lemma3}. For the second part, we use Lemma \ref{lemma5} and Inequality (\ref{eq12}).
\end{proof}

\subsection{An application: Existence of continuous moduli} \label{subsec:existence_moduli}
The aim of the next example is to answer Question \ref{main_question}. Actually, we are going to show that: given the $1$-parameter family of weighted homogeneous  polynomials
$f_{t}\colon\left(\mathbb{C}^{2},0\right) \rightarrow
\left(\mathbb{C},0\right) $ , $t\in\mathbb{C},$ given by
\begin{equation}\label{eq1.1}
f_{t}\left( x,y\right) =x^{3}-3t^{2}xy^{2d}+y^{3d}
\end{equation}
where $d>1$ is an integer number, there exists an uncountable subset $I\subset \C$ such that $f_t$ is not H\"older equivalent to $f_s$ for any $t\neq s\in I$.

Our strategy to compute the number ${\rm Inv}(f_t)$ of the family  of function-germs $(\mathbb{C}^2,0)\rightarrow(\mathbb{C},0)$
\begin{equation*}
f_{t}\left( x,y\right) =x^{3}-3t^{2}xy^{2d}+y^{3d}.
\end{equation*}

Indeed, the polar arcs of $f_t$ are given by the following equations:
$$0=\frac{\partial f_t}{\partial x} = 3x^2-3t^2y^{2d},$$ which means that they are parametrized by $x=\gamma_1(y)=ty^{d}$ and $x=\gamma_2(y)=-ty^{d}$. Then

\begin{eqnarray*}
f_t(\gamma_1(y),y) &=& (ty^d)^3-3t^2(ty^d)^{2d}+y^{3d} \\
&=& (1-2t^3)y^{3d}
\end{eqnarray*}

and

\begin{eqnarray*}
f_t(\gamma_2(y),y) &=& (-ty^d)^3-3t^2(-ty^d)^{2d}+y^{3d} \\
&=& (1+2t^3)y^{3d},
\end{eqnarray*}
hence ${\rm Inv}(f_t) = \{ (1-2t^3)y^{3d}, (1+2t^3)y^{3d} \} / \C^*$. Finally, by Theorem \ref{main theorem}, if $f_t$ is H\"older equivalent to $f_s$, then ${\rm Inv}(f_t)={\rm Inv}(f_s)$ and, it follows that  $t=\pm \xi s$, where $\xi\in\C$ is a primitive cubic unit root.

\end{document}